\numberwithin{equation}{section}
\theoremstyle{definition}\newtheorem{definition}{Definition}[section]
\newtheorem{remark}[definition]{Remark}
\newtheorem{conjecture}[definition]{Conjecture}}
\newtheorem{proposition}[definition]{Proposition}
\newtheorem{lemma}[definition]{Lemma}
\newtheorem{theorem}[definition]{Theorem}
\newtheorem{corollary}[definition]{Corollary}
\newlist{enumlist}{enumerate}{1}
\setlist[enumlist]{labelindent=0cm,label=\arabic*$^\circ$,labelwidth=2.5ex,labelsep=0.5ex,leftmargin=3ex,align=left,topsep=0.5ex,itemsep=1ex,parsep=1ex}
\newlist{itemlist}{itemize}{1}
\setlist[itemlist]{labelindent=0cm,label=$\bullet$,labelwidth=2.5ex,labelsep=0.5ex,leftmargin=3ex,align=left,topsep=0.5ex,itemsep=1ex,parsep=1ex}
\newcommand{\eps}{\varepsilon}
\newcommand{\R}{\mathbb{R}}
\newcommand{\C}{\mathbb{C}}
\newcommand{\sa}{_\text{sa}}
\newcommand{\so}{\text{\it so}}
\newcommand{\app}{\text{\it app}}
\newcommand{\cV}{\mathcal{V}}
\newcommand{\cW}{\mathcal{W}}
\newcommand{\N}{\mathbb{N}}
\newcommand{\recht}{\rightarrow}
\newcommand{\cZ}{\mathcal{Z}}
\newcommand{\Tr}{\operatorname{Tr}}
\newcommand{\vphi}{\varphi}
\newcommand{\cN}{\mathcal{N}}
\newcommand{\T}{\mathbb{T}}
\newcommand{\Z}{\mathbb{Z}}
\newcommand{\lspan}{\operatorname{span}}
\newcommand{\ovt}{\mathbin{\overline{\otimes}}}
\newcommand{\ot}{\otimes}
\newcommand{\bA}{\mathbf{A}}
\newcommand{\bM}{\mathbf{M}}
\newcommand{\bAtil}{\widetilde{\mathbf{A}}}
\newcommand{\bMtil}{\widetilde{\mathbf{M}}}
\newcommand{\actson}{\curvearrowright}
\newcommand{\cM}{\mathcal{M}}
\newcommand{\cA}{\mathcal{A}}
\newcommand{\cF}{\mathcal{F}}
\newcommand{\om}{\omega}
\newcommand{\ns}{\mathord{\text{\rm n}_{\text{\rm\tiny s}}}}
\newcommand{\id}{\mathord{\text{\rm id}}}
\newcommand{\rn}{\mathord{\text{\rm n}}}
\newcommand{\cB}{\mathcal{B}}
\newcommand{\cD}{\mathcal{D}}
\newcommand{\cE}{\mathcal{E}}
\newcommand{\si}{\sigma}
\newcommand{\bim}[3]{\mathord{\raisebox{-0.4ex}[0ex][0ex]{\scriptsize $#1$}{#2}\hspace{-0.25ex}\raisebox{-0.4ex}[0ex][0ex]{\scriptsize $#3$}}}
\begin{document}

\begin{center}
{\boldmath\LARGE\bf Paving over arbitrary MASAs \vspace{0.5ex}\\ in von Neumann algebras}

\bigskip

{\sc by Sorin Popa\footnote{Mathematics Department, UCLA, CA 90095-1555 (United States), popa@math.ucla.edu\\
Supported in part by NSF Grant DMS-1401718} and Stefaan Vaes\footnote{KU~Leuven, Department of Mathematics, Leuven (Belgium), stefaan.vaes@wis.kuleuven.be \\
    Supported by ERC Consolidator Grant 614195 from the European Research Council under the European Union's Seventh Framework Programme.}}

%
\end{center}

\begin{abstract}\noindent
We consider a paving property for a maximal abelian $^*$-subalgebra (MASA) $A$
in a von Neumann algebra $M$, that we call \emph{\so-paving}, involving approximation in the {\so}-topology, rather
than in norm (as in classical Kadison-Singer paving).
If $A$ is the range of a normal conditional expectation, then {\so}-paving is equivalent to
norm paving in the ultrapower inclusion $A^\omega\subset M^\omega$.
We conjecture that any MASA in any von Neumann algebra satisfies {\so}-paving.
We use \cite{MSS13} to check this for all MASAs in $\mathcal B(\ell^2\mathbb N)$, all Cartan subalgebras in amenable von Neumann
algebras and in group measure space II$_1$ factors arising from profinite actions.
By \cite{P13}, the conjecture also holds true for singular MASAs in II$_1$ factors, and we obtain here an improved paving size
$C\varepsilon^{-2}$, which we show to be sharp.
\end{abstract}

\section{Introduction}

A famous problem of R.V. Kadison and I.M. Singer in \cite{KS59} asked whether
the diagonal MASA (maximal abelian $^*$-subalgebra) $\mathcal D$ in the algebra $\mathcal B(\ell^2\mathbb N)$, of all linear bounded operators on
the Hilbert space $\ell^2\mathbb N$, satisfies the {\it paving property}, requiring that for any $x\in \mathcal B(\ell^2\mathbb N)$ with $0$ on the diagonal,
and any $\varepsilon > 0$, there exists a partition of $1$ with projections $p_1, ..., p_n \in \mathcal D$, such that $\|\sum_i p_i x p_i\|\leq \varepsilon \|x\|$.

In their striking recent work \cite{MSS13}, A. Marcus, D. Spielman and N. Srivastava
have settled this question in the affirmative, while also obtaining an estimate for the minimal number of projections necessary for such $\eps$-paving,
$\text{\rm n}(x,\eps) \leq 12^4\eps^{-4}$, $\forall x=x^*\in \mathcal B(\ell^2\mathbb N)$.

On the other hand, in \cite{P13} the paving property for $\mathcal D \subset \mathcal B(\ell^2\mathbb N)$ has been shown equivalent to the
paving property for the ultrapower inclusion $D^\omega \subset R^\omega$, where $R$ is the hyperfinite II$_1$ factor, $D$ is
its Cartan subalgebra and $\omega$ is a free ultrafilter on $\mathbb N$.
(Recall from \cite{D54}, \cite{FM77} that a subalgebra $A$ in a von Neumann algebra $M$ is a {\it Cartan subalgebra} if it is a MASA,
there exists a  normal conditional expectation of $M$ onto $A$ and the normalizer of $A$ in $M$, $\mathcal N_M(A)=\{u\in \mathcal U(M)\mid uAu^*=A\}$,
generates $M$.) It was also shown in \cite{P13} that if $A$ is a singular MASA in $R$, and more generally in an arbitrary  II$_1$ factor $M$,
then $A^\omega \subset M^\omega$ has the paving property, with corresponding paving size majorized by $C\eps^{-3}$. (Recall from \cite{D54} that a MASA $A\subset M$
is {\it singular} in $M$ if its normalizer is trivial, $\mathcal N_M(A)\subset A$.)

Inspired by these results, we consider in this paper a new, weaker, paving property for an arbitrary MASA $A$ in a von Neumann algebra $M$,
that we call \emph{\so-paving},  which requires that for any $x\in M\sa= \{x\in M \mid x=x^*\}$ and $\varepsilon>0$ there exists $n$ such that $x$ can be
$(\eps,n)$ \emph{\so-paved}, i.e., for any {\so}-neighborhood $\mathcal V$ of $0$
there exists a partition of $1$ with projections $p_1, ..., p_n$ in $A$ and an element $a\in A$ satisfying $\|a\|\leq \|x\|$ and $\|q(\sum_i p_ixp_i -a)q\| \leq
\varepsilon \|x\|$, for some projection $q\in M$ with $1-q\in \mathcal V$ (see Section 2). We prove that if
there exists a normal conditional expectation from $M$ onto $A$ then {\so}-paving is equivalent to the property that for any $x\in M\sa$ and $\varepsilon >0$
there exists $n$ such that $x$ can be  approximated in the {\so}-topology with elements that can be $(\eps,n)$ norm paved (see Theorem \ref{equiv.paving}).
If in addition $A$ is countably decomposable, then
{\so}-paving with uniform bound on the number $n$ necessary to $(\eps, n)$ {\so}-pave any $x\in M\sa$, is equivalent to
the ultrapower inclusion $A^\omega \subset M^\omega$ satisfying norm paving (with $M^\omega$ as defined in \cite{O85}).
In particular, this shows that {\so}-paving amounts to
norm paving in the case $\mathcal D \subset \mathcal B(\ell^2\mathbb N)$.

We conjecture that any MASA in any von Neumann algebra satisfies the {\so}-paving property (see \ref{conjecture}).
We use \cite{MSS13} to check this conjecture for all MASAs in $\mathcal B(\ell^2\mathbb N)$
(i.e., for the remaining case of  the diffuse MASA $L^\infty([0,1]) \subset \mathcal B(L^2([0,1]))$, see Section 3),
for all  Cartan subalgebras in amenable von Neumann algebras, as well as for any  Cartan subalgebra
in a group measure space II$_1$ factor arising from a free ergodic measure preserving profinite action (see Section 4). At the same time, we prove that for a von Neumann
algebra $M$ with separable predual, norm paving over a MASA $A\subset M$ occurs if and only if $M$ is of type I and there exists a
normal conditional expectation of $M$ onto $A$ (see \ref{thm.norm-paving}).

For singular MASAs $A\subset M$, where the conjecture already follows from results in \cite{P13},
we improve upon the paving size obtained there, by showing that any finite number of elements in $M^\omega$
can be simultaneously $\eps$-paved over $A^\omega$ with $n < 1 + 16 \eps^{-2}$ projections (see Theorem \ref{thm.singular-case}). Moreover, this estimate is sharp: given any MASA in a finite factor, $A\subset M$, and any $\eps>0$, there exists $x\in M\sa$ with zero expectation onto $A$, such that if $\|\sum_{i=1}^n p_i x p_i\| \leq \eps \|x\|$, for some partition of $1$ with
projections in $A$, then $n$ must be at least  $\eps^{-2}$ (see Proposition \ref{prop.lower-bound}). We include a discussion on the multi-paving size for $\mathcal D \subset \mathcal B(\ell^2\mathbb N)$ and more generally for Cartan subalgebras (see Remark \ref{rem.optimal}).

\section{A paving conjecture for MASAs}

We will consider several paving properties for a MASA $A$ in a von Neumann algebra $M$. For convenience we first recall
the  initial Kadison-Singer  paving property of \cite{KS59}, for which we use the following terminology.

\begin{definition}\label{def.paving}
An element $x \in M$ is said to be $(\eps,n)$ {\it pavable over} $A$
if there exist projections $p_1,\ldots,p_n \in A$ and
$a \in A$ such that $\|a\| \leq \|x\|$, $\sum_{i=1}^n p_i = 1$ and $\Bigl\| \sum_{i=1}^n p_i x p_i - a \Bigr\| \leq \eps \|x\|.$ We denote by $\text{\rm n}(A\subset M; x,\eps)$
(or just $\text{\rm n}(x,\eps)$, if no confusion is possible), the smallest such $n$.
Also, we say that $x$ is {\it pavable} (over $A$) if for every $\eps > 0$, there exists an $n$ such that $x$ is $(\eps,n)$ pavable. We say that $A\subset M$ has the {\it paving property} if any $x\in M$ is pavable. We will sometimes use the terminology {\it norm pavable/paving} instead of just pavable/paving,
when we need to underline the difference with other paving properties.
\end{definition}

It is not really crucial to impose $\|a\| \leq \|x\|$. Indeed, without that assumption, the element $a\in A$ in an $(\eps,n)$ norm paving of $x$
satisfies $\|a\|\leq (1+\varepsilon)\|x\|$ so that replacing $a$ by $a'=(1+\varepsilon)^{-1}a$, we have $\|a'\|\leq \|x\|$ and
$\|\sum_i p_i x p_i -a'\|\leq 2\varepsilon \|x\|$.

Also note that if there exists a normal conditional expectation $E$ of $M$ onto $A$, then the element $a \in A$ in an $(\eps,n)$ norm paving of $x$ satisfies $\|E(x) - a\| \leq \eps \|x\|$, so that $\|\sum_i p_i x p_i - E(x)\|\leq 2\varepsilon \|x\|$. In the presence of a normal conditional expectation, one often defines $(\eps,n)$ norm pavability by requiring the partition
$p_1, ..., p_n \in A$ to satisfy $\|\sum_i p_i x p_i -E(x)\|\leq \varepsilon \|x\|$.

Finally note that if $y_1, y_2 \in M\sa$ are $(\eps, n)$ pavable, then $y_1+i y_2$ is $(2\eps, n^2)$ pavable. Thus, in order to obtain the paving property
for $A\subset M$, it is sufficient to check pavability of self-adjoint elements in $M$.

We next define two weaker notions of paving, involving approximation in the {\so}-topology rather than in norm.

\begin{definition}
An element $x \in M$ is  $(\eps,n)$ \emph{\so-pavable over $A$} if for every strong neighborhood $\cV$ of $0$ in $M$,
there exist projections $p_1,\ldots,p_n \in A$,  an element $a \in A$ and a projection $q\in M$ such that $\|a\|\leq \|x\|$,
$\sum_{i=1}^n p_i = 1$, $\|q(\sum_i p_i x p_i -a)q\| \leq \varepsilon \|x\|$
and $1-q \in \cV$. We denote by $\ns(x,\eps)$ the smallest such $n$.
An element $x\in M$ is \emph{\so-pavable over $A$} if for any
$\eps>0$, there exists $n$ such that $x$ is $(\eps,n)$ {\so}-pavable. We say that $A\subset M$ has the \emph{\so-paving} property if any $x\in M\sa$ is {\so}-pavable.
\end{definition}

It is easy to see that if $M$ is a finite von Neumann
algebra with a faithful normal trace $\tau$ and $x\in M\sa$, then $x$ is $(\eps,n)$ {\so}-pavable iff
given any $\delta>0$ there exist a partition of $1$ with  projections $p_1, ..., p_n\in A$ and $a\in A\sa$, $\|a\|\leq \|x\|$, such that
the spectral projection $q$ of $\sum_i p_i x p_i -a$ corresponding to $[-\eps \|x\|, \eps \|x\| ]$ satisfies $\tau(1-q)\leq \delta$.
As pointed out in \cite[Remark 2.4.1$^\circ$]{P13}, if $\omega$
is a free ultrafilter on $\N$,  then  $x\in M\sa$ has this latter property
if and only if, when viewed as an element in $M^\omega$, it is pavable over the ultrapower MASA $A^\omega$ of $M^\omega$.

\begin{definition}
An element $x \in M$ is  $(\eps,n; \kappa)$ \emph{\app-pavable over $A$}
if it can be approximated in the $so$-topology by a net
of $(\eps, n)$ pavable elements in $M$, bounded in norm by $\kappa \|x\|$.  An element $x\in M$ is \emph{\app-pavable over $A$}  if there
exists $\kappa_0$ such that for any
$\eps>0$, there exists $n$ such that $x$ is $(\eps,n; \kappa_0)$ {\app}-pavable. We say that $A\subset M$ has the \emph{\app-paving} property if any $x\in M\sa$
is {\app}-pavable.
\end{definition}

Obviously, norm paving implies {\so}- and {\app}-paving, with n$(x,\eps) \geq \ns (x,\eps)$, $\forall x$.
The next result shows that if a MASA is the range of a normal conditional
expectation then {\so}- and {\app}-pavability are in fact equivalent.

\begin{proposition} \label{prop.equiv-paving}
Let $M$ be a von Neumann algebra and  $A \subset M$ a MASA with the property
that there exists a normal conditional expectation $E:M \rightarrow A$. Let $x\in M\sa$,  $n \in \N$, $ \eps > 0$.
\begin{enumlist}
\item If $x$ is $(\eps,n;\kappa)$ app-pavable for some $\kappa\geq 1$, then
$x$ is $(2\kappa\eps',n)$ so-pavable for any $\varepsilon'> \varepsilon$.
\item If $x$ is $(\eps,n)$ so-pavable, then $x$ is $(\eps',n;3)$ app-pavable for any $\varepsilon'> \varepsilon$.
\end{enumlist}
\end{proposition}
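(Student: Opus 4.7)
The plan for (1) is as follows. Let $(x_\alpha)$ be a net of $(\eps,n)$-norm-pavable elements with $x_\alpha \to x$ in the \so-topology, $\|x_\alpha\|\leq\kappa\|x\|$, and pavings $(p_i^\alpha, a_\alpha)$; WLOG $x_\alpha = x_\alpha^*$. Given an \so-neighborhood $\cV = \{z : \|z\xi_k\| < \delta,\; k\leq m\}$, I would take $a = E(x)$ (so $\|a\|\leq\|x\|$), $p_i = p_i^\alpha$ for $\alpha$ to be selected below, and build the projection $q$ by spectral calculus. Setting $y = x - E(x)$, $y_\alpha = x_\alpha - E(x_\alpha)$, and $T_\alpha(z) := \sum_i p_i^\alpha z p_i^\alpha$, decompose
\[
\sum\nolimits_i p_i x p_i - E(x) \;=\; T_\alpha(y_\alpha) \;+\; T_\alpha(y - y_\alpha).
\]
Since $E\circ T_\alpha = E$ and $E$ is contractive, applying $E$ to $\sum_i p_i^\alpha x_\alpha p_i^\alpha - a_\alpha$ gives $\|a_\alpha - E(x_\alpha)\|\leq\kappa\eps\|x\|$, whence $\|T_\alpha(y_\alpha)\|\leq 2\kappa\eps\|x\|$.

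The key point is that $r_\alpha := T_\alpha(y-y_\alpha)$ tends to $0$ in the \so-topology. The Schwarz inequality for the UCP map $T_\alpha$ together with $E\circ T_\alpha = E$ yields
\[
E(r_\alpha^* r_\alpha) \;\leq\; E\bigl((y-y_\alpha)^*(y-y_\alpha)\bigr),
\]
and the right-hand side tends to $0$ strongly in $A$ by normality of $E$ and $y_\alpha \to y$ strongly. In the finite tracial case this directly gives $\|r_\alpha\|_2\to 0$, hence strong convergence of $r_\alpha$; in general one tests against the seminorms $\|z\|_{\psi\circ E} = \psi(E(z^*z))^{1/2}$, $\psi \in A_*^+$, which generate the relevant \so-topology on bounded sets via the standard form of $M$ built from $E$. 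Then, setting $c = 2\kappa(\eps'-\eps)\|x\|$ and picking $\alpha$ so large that $\|r_\alpha\xi_k\|\leq c\delta$ for each $k$, take $q = \kar_{[-c,c]}(r_\alpha)$. This gives $\|q r_\alpha q\|\leq c$ and $\|(1-q)\xi_k\|^2 \leq \|r_\alpha\xi_k\|^2/c^2 \leq \delta^2$, so $1-q\in\cV$ and
\[
\bigl\|q\bigl(\sum\nolimits_i p_i x p_i - E(x)\bigr)q\bigr\| \;\leq\; 2\kappa\eps\|x\| + c \;=\; 2\kappa\eps'\|x\|.
\]

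For (2), given the \so-paving data $(p_i^\cV, a^\cV, q^\cV)$, set $w_\cV := \sum_i p_i^\cV x p_i^\cV - a^\cV$ and define
\[
x_\cV \;:=\; x - w_\cV + q^\cV w_\cV q^\cV.
\]
Then $x - x_\cV = w_\cV - q^\cV w_\cV q^\cV$ carries a factor $1 - q^\cV$ on the left or the right, so $x_\cV \to x$ in the \so-topology as $\cV$ shrinks, while $\|x_\cV\|\leq\|x\|+\|w_\cV\|+\|q^\cV w_\cV q^\cV\|\leq (3+\eps)\|x\|$, which is essentially the claimed bound $3\|x\|$. Applying $T_\cV := \sum_i p_i^\cV (\cdot) p_i^\cV$ gives $\sum_i p_i^\cV x_\cV p_i^\cV - a^\cV = T_\cV(q^\cV w_\cV q^\cV)$ of norm at most $\|q^\cV w_\cV q^\cV\|\leq\eps\|x\|$. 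Hence $(p_i^\cV, a^\cV)$ is a norm paving of $x_\cV$ with error $\leq\eps\|x\|\leq\eps'\|x_\cV\|$ for all sufficiently small $\cV$, since $\|x_\cV\|\to\|x\|$ by lower semicontinuity of the norm under strong convergence; the constraint $\|a^\cV\|\leq\|x_\cV\|$ is then met (with a harmless rescaling absorbed by the slack $\eps'>\eps$ if needed).

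The main obstacle is the general, non-tracial case of (1): passing from $E(r_\alpha^* r_\alpha)\to 0$ strongly in $A$ to $r_\alpha\to 0$ in the \so-topology of $M$ requires the standard-form / normal-weight machinery attached to the CE $E$. The tracial case is immediate from the $L^2$-contractivity of $T_\alpha$. Part (2) is essentially a direct construction, with the only bookkeeping being the norm bound on $x_\cV$ and on $a^\cV$ within $\|x_\cV\|$.
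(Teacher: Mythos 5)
Your part 1$^\circ$ is essentially correct, and it replaces the paper's key technical ingredient by a different one: where the paper proves and applies Lemma \ref{lem.uniform} (uniform strong convergence of $x_k a$ over $a$ in the unit ball of $A$) to show that $T_j-S_j\recht 0$ strongly, you use the Kadison--Schwarz inequality for the unital completely positive compression $T_\alpha$ together with $E\circ T_\alpha=E$ to get $E(r_\alpha^*r_\alpha)\leq E\bigl((y-y_\alpha)^*(y-y_\alpha)\bigr)\recht 0$, and then invoke the fact that the seminorms $z\mapsto\psi(E(z^*z))^{1/2}$, $\psi\in A_*^+$, determine the {\so}-topology on bounded sets. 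That last fact is exactly the ``faithful family of normal positive functionals'' principle (note that $E$ is automatically faithful, since its support lies in $A'\cap M=A$), which the paper itself invokes inside the proof of Lemma \ref{lem.uniform}; so your route is legitimate, arguably slicker, and your bookkeeping $2\kappa\eps\|x\|+2\kappa(\eps'-\eps)\|x\|=2\kappa\eps'\|x\|$ and the spectral projection argument for $q$ agree with the paper's.

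Part 2$^\circ$, however, has a genuine gap at the claim that $x_\cV\recht x$ in the {\so}-topology. Writing $w_\cV-q^\cV w_\cV q^\cV=(1-q^\cV)w_\cV+q^\cV w_\cV(1-q^\cV)$, the second term is indeed controlled by putting the vectors of the target neighborhood into $\cV$; but the first term is not: strong smallness of $1-q^\cV$ means smallness on vectors fixed in advance, whereas the vector $w_\cV\xi$ (which involves $\sum_i p_i^\cV x p_i^\cV\xi$ and $a^\cV\xi$) depends on the paving data delivered for that very $\cV$. So $\|(1-q^\cV)w_\cV\xi\|$ can a priori be of order $2\|x\|\,\|\xi\|$, and ``carries a factor $1-q^\cV$ on the left'' does not give convergence. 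To control it you would need a uniform statement of the type: $\sup\|(1-q)(\sum_i p_i x p_i-a)\xi\|\recht 0$ as $1-q\recht 0$ strongly, the supremum running over all $a\in A$ with $\|a\|\leq\|x\|$ and over all partitions $(p_i)$ in $A$ --- already the $a$-part alone requires Lemma \ref{lem.uniform}, and the partition-part requires a two-sided strengthening of it. This is precisely why the paper does not take $y=x-w+qwq$, but replaces $q$ by $q'=\sum_i e_i$ with $e_i\leq p_i$ the spectral projections of $p_i q p_i$: then $q'$ commutes with the partition, $x-y=(x-q'xq')+(q'aq'-a)$, and the left factor $1-q'$ multiplies either the fixed element $x$ or an element of the ball of $A$, the latter handled by Lemma \ref{lem.uniform}. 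Two smaller points: your norm bound is $(3+\eps)\|x\|$ rather than the required $3\|x\|$, and lower semicontinuity only gives $\liminf\|x_\cV\|\geq\|x\|$ (the paper builds the lower bound $\|y\|>\gamma\|x\|$ directly into the neighborhood $\cW_1$); these are repairable, but the convergence step is not, without an argument of the above uniform kind or a construction along the paper's lines.
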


\begin{proof} Proof of $1^\circ$. Let  $x_j \in M\sa$ with $\|x_j\| \leq \kappa \|x\|$ for all $j$ and such that $x_j$
is $(\eps,n)$ pavable for all $j$ and $x_j$ converges to $x$ in the {\so}-topology. Fix $\eps' > \eps$. We prove that $x$ is $(2\kappa\eps',n)$ {\so}-pavable, i.e that
given any {\so}-neighborhood $\mathcal V$  of $0$, there exist a partition of $1$ with projections $p_1, ..., p_n \in A$,
an element $a\in A$ and $q\in \mathcal P(M)$ such
that $1-q\in \mathcal V$ and  $\|q(\sum_i p_i xp_i - a)q\|\leq 2\kappa \varepsilon'\|x\|$.

Note that by changing if necessary the multiplicity of the representation of $M$ on the Hilbert space $\mathcal H$,
we may assume that the given neighborhood $\mathcal V$
is of the form $\mathcal V=\{x\in M\sa  \mid \|x\xi\| \leq \alpha \}$, for some unit vector $\xi \in \mathcal H$ and $\alpha>0$.

For every $j$, choose a partition of $1$ by projections $p_{j,1},\ldots,p_{j,n} \in A$ and an element $a_j \in A$ such that
$$\Bigl\| \sum_{i=1}^n p_{j,i} x_j p_{j,i} - a_j \Bigr\| \leq \eps \|x_j\| \leq \kappa \eps \|x\| \; .$$

Applying the conditional expectation $E$, it also follows that $\|E(x_j) - a_j\| \leq \kappa \eps \|x\|$. Therefore,
$$\Bigl\| \sum_{i=1}^n p_{j,i} (x_j - E(x_j)) p_{j,i} \Bigr\| \leq 2 \kappa \eps \|x\| \; .$$
Define the self-adjoint elements
$$T_j = \sum_{i=1}^n p_{j,i}(x-E(x)) p_{j,i} \quad\text{and}\quad S_j = \sum_{i=1}^n p_{j,i} (x_j - E(x_j)) p_{j,i} \; .$$
Let $\delta=2(\varepsilon'-\varepsilon)\kappa \|x\|$. Recall that the normal conditional expectation $E$ is automatically faithful because its support is a projection in $A' \cap M = A$ and thus equal to $1$. So, we can apply Lemma \ref{lem.uniform} and since $x_j \recht x$ strongly, we get that $T_j - S_j \recht 0$ strongly.
Thus, there exists $j$ large enough such that $|S_j-T_j| \in \delta \cV$, i.e. $\|(T_j-S_j)\xi\| < \alpha\delta$.

We claim that if we denote by $q$ the spectral projection of $|T_j-S_j|$ corresponding to the interval $[0, \delta]$,
then $1-q \in \cV$, i.e. $\|(1-q)\xi\| < \alpha$. Indeed, for if not, then $\|(1-q)\xi\| \geq \alpha$
and thus $\| |T_j-S_j| (1-q) \xi\| \geq \alpha \delta$, implying that

$$
\|(T_j-S_j)\xi\|\geq  \| |T_j-S_j| (1-q)\xi\| \geq \alpha\delta > \|(T_j-S_j)\xi\| \; ,$$
a contradiction.

On the other hand, $a=E(x)$ satisfies $\|a\|\leq \|x\|$ and we also have the estimates
$$
\|q(\sum_{i=1}^n p_{j,i} (x - E(x)) p_{j,i})q\| = \|qT_jq\|\leq \|q(T_j-S_j)q\|  + \|qS_jq\| \leq \delta + 2\kappa \varepsilon \|x\|  = 2\kappa \varepsilon'\|x\| \; .
$$
This finishes the proof of $1^\circ$.

Proof of $2^\circ$. Note that if $\varepsilon'\geq 2$ then there is nothing to prove. So without any loss
of generality we may assume $0<\varepsilon < \varepsilon'<2$. Denote $\alpha=1-\frac{\eps' - \eps}{2}$
and $\gamma=1-\frac{\alpha\eps' - \eps}{6} $.   Note that $\varepsilon'< 2$
implies  $\alpha\varepsilon' > \varepsilon$, so $\gamma < 1$. We clearly also have $\gamma > \alpha$.

Let $x \in M\sa$ be $(\eps,n)$ \so-pavable. Fix an open {\so}-neighborhood $\cW$ of $0$ in $M$.
We construct an $(\eps',n)$-pavable element $y \in M\sa$ with $\|y\| \leq 3\|x\|$ and $x-y \in \cW$. We may assume that $x \neq 0$.

By the lower semicontinuity of the norm with respect to the {\so}-topology, it follows that the set
$$\cW_1 = \cW \cap \{h \in M \mid \|x-h\| > \gamma \|x\| \}$$
is an open {\so}-neighborhood  of $0$ in $M$. Choose an open {\so}-neighborhood $\cW_0$ of $0$ such that
 $\cW_0 + \cW_0 \subset \cW_1$.

Using Lemma \ref{lem.uniform} below to realize the second point, we can fix an \so-neighborhood $\cV_1$ of $0$ such that for every projection $q \in M$ with $1-q \in \cV_1$, we have that
\begin{itemlist}
\item $x - qxq \in \cW_0$~;
\item $qaq - a \in \cW_0$ for all $a \in A$ with $\|a\| \leq \|x\|$.
\end{itemlist}
Again using Lemma \ref{lem.uniform} below, we can fix an \so-neighborhood $\cV_0 \subset \cV_1$ of $0$ such that for every projection $q \in M$ with $1-q \in \cV_0$, we have the following property.
\begin{itemlist}
\item For any partition of $1$ with projections $p_1, ..., p_n \in A$, the spectral projection $q'$ of $\sum_i p_iqp_i$
corresponding to the interval $(1-(\frac{\alpha\eps' - \eps}{6n^2})^2, 1]$ satisfies $1-q'\in \cV_1$.
\end{itemlist}
Since $x$ is $(\eps,n)$ {\so}-pavable, we can choose projections $p_1,\ldots,p_n \in A$, an element $a \in A$ and a projection $q\in M$ such that
$\|a\| \leq \|x\|$, $\sum_{i=1}^n p_i = 1$,  $\|q(\sum_i p_i x p_i -a)q\|\leq \varepsilon \|x\|$ and $1-q\in \cV_0$.

For each $i$, let  $e_i$ be the spectral projection of $p_iqp_i$ corresponding to the interval $(1-(\frac{\alpha\eps' - \eps}{6n^2})^2, 1]$
and denote $q'=\sum_i e_i$. By the last of the above properties, we have $1-q' \in \cV_1$.
Define $y = q'(x-a)q' + a$ and note that $\|y\| \leq \|x-a\| + \|a\| \leq 3 \|x\|$. We will prove that $x-y \in \cW$ and that $y$ is $(\eps',n)$-pavable.

Indeed, because $1-q' \in \cV_1$, we have
$$x-y = (x-q'xq') + (q'aq'-a)  \in \cW_0 + \cW_0 \subset \cW_1 \; .$$
So, $x-y \in \cW$ and $\|y\|\geq \gamma \|x\|$. Since this implies $\|\gamma a\| \leq \|y\|$, in order to prove that $y$ is $(\eps',n)$-pavable, it is sufficient to prove that
$\|\sum_i p_i y p_i - \gamma a \| \leq  \eps' \|y\|$.
To see this, note first that we have
$$
\sum_i p_i y p_i -\gamma a = \sum_i p_iq'(x-a)q'p_i  + (1-\gamma)a
=\sum_i e_i (x-a) e_i  + (1-\gamma)a
$$
and thus
$$
\|\sum_i p_i y p_i -\gamma a\|\leq \|\sum_i e_i(x-a)e_i\| + (1-\gamma)\|x\|.
$$
Since by the definition of $e_i$, we have
$$\|e_i-e_iq\|^2=\|e_i - e_i q e_i\| = \|e_i - e_i(p_iqp_i)\|\leq (\frac{\alpha\eps' - \eps}{6n^2})^2 \; ,$$
it follows that $\|q'-q'q\|\leq \sum_i \|e_i-e_iq\|\leq n \frac{\alpha\eps' - \eps}{6n^2}
=\frac{\alpha\eps' - \eps}{6n}$.
Thus, since $e_i = q' p_i$, we get that
$$\|e_i-q'qp_i\| = \|(q' - q'q) p_i\| \leq \|q'q-q'\|\leq  \frac{\alpha\eps' - \eps}{6n} \; ,$$
implying that
\begin{align*}
\|\sum_i p_iyp_i - \gamma a\| & \leq \|\sum_i e_i(x-a)e_i\|  + (1-\gamma)\|x\| \\
& \leq \sum_i \|e_i-q'qp_i\| \, \|x-a\|+ \|q'q(\sum_i p_ixp_i-a)qq'\| + \\ &\hspace{4cm} \sum_i \|x-a\| \, \|e_i - p_iqq'\| +(1-\gamma)\|x\| \\
& \leq \frac{\alpha\varepsilon'-\varepsilon}{3} \|x-a\| + \varepsilon \|x\| +(1-\gamma) \|x\|
\leq \frac{5\alpha\varepsilon'+ \varepsilon}{6} \|x\| \\
&\leq \frac{5\alpha\varepsilon'+ \varepsilon}{6} \gamma^{-1}  \|y\| \leq \alpha \gamma^{-1}\varepsilon' \|y\| < \varepsilon'\|y\| \; ,
\end{align*}
with the two last inequalities holding true because $\eps < \alpha \eps'$ and $\alpha \gamma^{-1} < 1$.
\end{proof}

In the proof of the above Proposition \ref{prop.equiv-paving}, we used the following elementary lemma.

\begin{lemma}\label{lem.uniform}
Let $M \subset \cB(H)$ be a von Neumann algebra and $P \subset M$ a von Neumann subalgebra. Assume that $P$ is finite and that $E : M \recht P$ is a normal faithful conditional expectation. If $(x_k)$ is a bounded net in $M$ that strongly converges to $0$, then the nets $(x_k a)$ converge strongly to $0$ uniformly over all $a \in (P)_1$~:
$$\text{for every $\xi \in H$, we have that}\quad \lim_k\Bigl(\sup_{a \in (P)_1} \|x_k a \xi\|\Bigr) = 0 \; .$$
\end{lemma}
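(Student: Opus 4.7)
The plan is to reduce, via a tracial-state construction on $P$, to the standard form of $M$, where the desired estimate is a one-line consequence of traciality.

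Since $P$ is finite, it carries a faithful normal tracial state $\tau_P$ (if $P$ is not countably decomposable, decompose $1_P$ as an orthogonal sum of central $\sigma$-finite projections and argue summand by summand). Set $\Phi := \tau_P \circ E$, a faithful normal state on $M$, and realize $M$ in standard form in its GNS representation $(H_\Phi, \hat 1)$. For any $a \in (P)_1$ the crucial computation is
\[
\|x_k a \hat 1\|^2 = \Phi(a^* x_k^* x_k a) = \tau_P\bigl(a^* E(x_k^* x_k) a\bigr) = \tau_P\bigl(a a^* E(x_k^* x_k)\bigr) \leq \tau_P\bigl(E(x_k^* x_k)\bigr) = \|x_k \hat 1\|^2,
\]
using the $P$-bimodularity of $E$, the traciality of $\tau_P$, and the standard inequality $\tau(by) = \tau(y^{1/2} b y^{1/2}) \leq \|b\|\,\tau(y)$ for tracial $\tau$ with $b, y \geq 0$, applied to $b = a a^* \leq 1$. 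Since $(x_k)$ is bounded and strongly null in $\cB(H)$ it is $\sigma$-strongly null as a net in $M$, so $\|x_k \hat 1\|^2 = \Phi(x_k^* x_k) \to 0$. This settles the lemma uniformly in $a \in (P)_1$ for the vector $\hat 1 \in H_\Phi$.

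Next I extend to arbitrary $\eta \in H_\Phi$ by $M$-bounded approximation. The dense subspace $M' \hat 1 \subset H_\Phi$ (dense by cyclicity of $\hat 1$ for the commutant in standard form) consists of $M$-bounded vectors $\eta = m' \hat 1$, for which the commutation $x_k a \eta = m' (x_k a \hat 1)$ yields $\|x_k a \eta\| \leq \|m'\|\,\|x_k a \hat 1\|$, reducing to the previous estimate; the uniform bound $\|x_k a\| \leq C$ then lets a routine $\eps$-argument extend uniform convergence to all $\eta \in H_\Phi$.

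To transfer back to the original $H$, observe that for any $\xi \in H$ the normal positive functional $\omega_\xi$ on $M$ equals $\omega_{\xi'}$ for some $\xi'$ in the positive cone of $H_\Phi$, by Haagerup's standard-form theorem. Applying this to the elements $a^* x_k^* x_k a \in M$ gives
\[
\|x_k a \xi\|_H^2 = \omega_\xi(a^* x_k^* x_k a) = \omega_{\xi'}(a^* x_k^* x_k a) = \|x_k a \xi'\|_{H_\Phi}^2,
\]
so the uniform convergence established for $\xi'$ transfers verbatim to $\xi$. The only mildly technical point is the initial reduction to countably decomposable $P$; the substantive content is entirely captured by the short tracial estimate above, which is why finiteness of $P$ (producing the tracial $\tau_P$) and the existence of $E$ enter in an essential but elementary way.
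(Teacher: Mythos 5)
Your proof is correct, and although it exploits the same underlying mechanism as the paper's argument -- $P$ lies in the centralizer of $\tau_P \circ E$ (resp.\ of $\Tr \circ E$), so right multiplication by $(P)_1$ is contractive on the GNS space -- the execution is genuinely different. The paper works directly with the nsf weight $\vphi = \Tr \circ E$, so no reduction to countably decomposable $P$ is needed; it replaces your single state by the faithful family $\vphi_z$, $z \in \cZ(P)$ of finite central trace, and obtains the contractivity from the modular identity $\widehat{xa} = J_\vphi \pi_\vphi(a)^* J_\vphi \hat{x}$. You instead obtain it from the elementary tracial inequality $\tau_P\bigl(a a^* E(x_k^* x_k)\bigr) \leq \tau_P\bigl(E(x_k^* x_k)\bigr)$, which avoids modular theory altogether. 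Conversely, you are more explicit than the paper about returning from the GNS space to an arbitrary $\xi \in H$: first commutant vectors $m'\hat{1}$ together with the uniform bound on $\|x_k a\|$, then Haagerup's standard form theorem to write $\omega_\xi = \omega_{\xi'}$ with $\xi'$ in the positive cone -- a passage the paper compresses into the assertion that it suffices to verify \eqref{eq.suff} for the faithful family $\{\vphi_z\}$. The one place you are terse is the reduction to countably decomposable $P$: the decomposition of $1$ into central projections $z_i$ with $P z_i$ $\sigma$-finite does exist (finiteness of $P$ gives the center-valued trace, and $\cZ(P)$ splits into $\sigma$-finite corners), and for a fixed $\xi$ the supremum over $a \in (P)_1$ stays under control because the $z_i$ commute with $a$, so all but finitely many summands contribute at most $\sup_k \|x_k\| \, \bigl\|(1 - \sum_{i \in F} z_i)\xi\bigr\|$; moreover each summand requires the statement for $P z_i \subset z_i M z_i$ applied, e.g., to the bounded strongly null net $z_i x_k^* x_k z_i$ (or a non-unital variant), since $x_k$ itself does not lie in $z_i M z_i$. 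These points are routine but deserve a couple of lines; everything else is complete and correct.
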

\begin{proof}
Since $P$ is finite, we can fix a normal semifinite faithful (nsf) trace $\Tr$ on $P$ with the property that the restriction of $\Tr$ to the center $\cZ(P)$ is still semifinite. Define the nsf weight $\vphi = \Tr \circ E$ on $M$ and the corresponding space $\cN_\vphi = \{x \in M \mid \vphi(x^* x) < \infty\}$. We complete $\cN_\vphi$ into a Hilbert space $H_\vphi$~: to every $x \in \cN_\vphi$ corresponds a vector $\hat{x} \in H_\vphi$ and $M$ is faithfully represented on $H_\vphi$ by $\pi_\vphi(x) \hat{y} = \widehat{xy}$.

Whenever $z \in \cZ(P)$ is a projection with $\Tr(z) < \infty$, we consider the normal positive functional $\vphi_z \in M_*$ given by $\vphi_z(x) = \vphi(z x z)$. Since these $\vphi_z$ form a faithful family of normal positive functionals on $M$, it suffices to prove that
\begin{equation}\label{eq.suff}
\lim_k \Bigl(\sup_{a \in (P)_1} \vphi_z(a^* x_k^* x_k a) \Bigr) = 0 \quad\text{for all projections}\;\; z \in \cZ(P) \;\;\text{with}\;\; \Tr(z) < \infty \; .
\end{equation}
We denote by $J_\vphi$ the modular conjugation on $H_\vphi$. Since $P$ belongs to the centralizer of the weight $\vphi$, we have that $\widehat{xa} = J_\vphi \pi_\vphi(a)^* J_\vphi \hat{x}$ for all $x \in \cN_\vphi$ and $a \in P$. For $z \in \cZ(P)$ with $\Tr(z) < \infty$ and $a \in P$, we then find that
$$\vphi_z(a^* x_k^* x_k a) = \|\widehat{x_k a z}\|^2 = \|J_\vphi \pi_\vphi(a)^* J_\vphi \widehat{x_k z}\|^2 \leq \|a\|^2 \, \vphi_z(x_k^* x_k) \; .$$
Since $\lim_k \vphi_z(x_k^* x_k) = 0$, we get \eqref{eq.suff} and the lemma is proved.
\end{proof}

\begin{remark}
For Lemma \ref{lem.uniform} to hold, both the finiteness of $P$ and the existence of the normal faithful conditional expectation $E : M \recht P$ are crucial. First note that the lemma fails for the diffuse MASA in $\cB(H)$. It suffices to take $M = \cB(L^2(\T))$ and $P = L^\infty(\T)$, w.r.t.\ the normalized Lebesgue measure on $\T$. Consider the unitary operators $a_n \in P$ given by $a_n(z) = z^n$. We can also consider the $(a_n)_{n \in \Z}$ as an orthonormal basis of $L^2(\T)$ and define $x_k$ as the orthogonal projection onto the closure of $\lspan\{a_n \mid n \geq k\}$. Then, $x_k \recht 0$ strongly. With $\xi(z) = 1$ for all $z \in \T$, we find that $\sup_n \|x_k a_n \xi\|_2 = 1$ for every $k$. So, the existence of the conditional expectation $E$ is essential.

The previous paragraph implies in particular that the lemma fails if $M = P = \cB(H)$. So also the finiteness of $P$ is essential.
\end{remark}

We will now relate {\so}- and {\app}-pavability properties for a MASA $A\subset M$ having a normal conditional expectation $E_A:M\rightarrow A$, with the
norm-pavability for the associated inclusion of ultrapower algebras $A^\omega \subset M^\omega$. We will only consider the case when $A$ is countably decomposable,
i.e.,  when there exists a normal faithful state $\varphi$ on $A$. We still denote by $\varphi$ its extension to $M$ given by $\varphi\circ E_A$.

For the reader's convenience, we recall Ocneanu's definition of the ultrapower of a von Neumann algebra, from \cite{O85}.
Thus,  given a free ultrafilter $\omega$ on $\mathbb N$, one lets $I_\omega$ be the C$^*$-algebra of all bounded sequences $(x_n)_n \in \ell^\infty(\mathbb N, M)$ that
are $s^*$-convergent to $0$ along the ultrafilter $\omega$. One denotes by $M^{0,\omega}$ the multiplier (also called the
bi-normalizer) of $I_\omega$ in $\ell^\infty(\mathbb N, M)$ (which is easily seen to be a C$^*$-algebra)
and one defines $M^\omega$ to be the quotient $M^{0,\omega}/I_\omega$. This is shown in \cite{O85} to be a von Neumann algebra, called the $\omega$-{\it ultrapower of} $M$.
Since the constant sequences are in the multiplier $M^{0,\omega}$, we have a natural embedding $M\subset M^\omega$. It is easy to see that if $M$ is an atomic von Neumann algebra,
then $M^\omega=M$, in particular  $\cB(\ell^2\mathbb N)^\omega = \mathcal B(\ell^2\mathbb N)$.

To define the ultrapower MASA $A^\omega\subset M^\omega$, one proceeds as in \cite[Section 1.3]{P95}. One lets $E^{0,\omega}_A:\ell^\infty(\mathbb N, M)\rightarrow \ell^\infty(\mathbb N, A)$
be the conditional expectation defined by $E^{0,\omega}_A((x_n)_n)=(E_A(x_n))_n$. One  notices that $E^{0,\omega}_A(I_\omega) = I_\omega \cap \ell^\infty(\mathbb N, A)=\{(a_n)
\in \ell^\infty(\mathbb N, A) \mid \lim_\omega \varphi(a_n^*a_n)=0\}$ and
that $\ell^\infty(\mathbb N, A) \subset M^{0,\omega}$. Finally, one defines
$A^\omega = (\ell^\infty(\mathbb N, A)+I_\omega)/I_\omega \simeq \ell^\infty(\mathbb N, A)/I_\omega \cap \ell^\infty(\mathbb N, A)$.
It follows that $A^\omega$ this way defined is a von Neumann subalgebra of $M^\omega$, with $E^{0,\omega}_A$
implementing a normal conditional expectation $E_{A^\omega}$, which sends the class of $(x_n)_n$ to the class of
$(E_A(x_n))_n$. Moreover, by \cite[Theorem A.1.2]{P95}, it follows that $A^\omega$ is a MASA in $M^\omega$. Note also that $E_{A^\omega}$
coincides with $E_A$ when restricted to constant sequences in $M\subset M^\omega$.
From the above remark, the ultrapower of $\mathcal D \subset \mathcal B(\ell^2\mathbb N)$ coincides with $\mathcal D \subset \mathcal B(\ell^2\mathbb N)$ itself.

\begin{theorem}\label{equiv.paving}
Let $M$ be a von Neumann algebra and $A\subset M$ a MASA with the
property that there exists a normal conditional expectation $E_A :M\rightarrow A$. Let $\omega$ be a free ultrafilter on $\mathbb N$ and denote
by $A^\omega \subset M^\omega$ the corresponding ultrapower inclusion.

\begin{enumlist}
\item An element $x\in M\sa$  is {\so}-pavable over $A$ if and only if $x$ is  {\app}-pavable over $A$. So, $A\subset M$ has the {\so}-paving property if and only if it has the {\app}-paving property.

\item Assume that $A$ is countably decomposable. Then $x \in M\sa$ is \so-pavable over $A$ if and only if $x$ is norm pavable over $A^\om$. More precisely,
if $x \in M\sa$ is $(\eps,n)$ \so-pavable, then $x$ is $(\eps,n)$ norm pavable over $A^\omega$~; conversely, if $x \in M\sa$ is $(\eps,n)$ norm pavable over $A^\omega$, then $x$ is $(\eps',n)$ \so-pavable for all $\eps' > \eps$.

\item Still assume that $A$ is countably decomposable. Then the uniform \so-paving property of $A \subset M$ is equivalent with the uniform paving property of $A^\om \subset M^\om$.
More precisely, if every $x \in M\sa$ is $(\eps,n)$ \so-pavable, then every $x \in M^\om\sa$ is $(\eps,n)$ norm pavable.
\end{enumlist}
\end{theorem}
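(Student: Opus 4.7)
Part $1^\circ$ is immediate from Proposition \ref{prop.equiv-paving}: statement $2^\circ$ there gives that $(\eps,n)$-\so-pavability implies $(\eps',n;3)$-\app-pavability for every $\eps'>\eps$, and statement $1^\circ$ gives that $(\eps,n;\kappa)$-\app-pavability implies $(2\kappa\eps',n)$-\so-pavability, so the two notions coincide when one lets $\eps'\searrow\eps$.

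For $2^\circ$, first fix a faithful normal state $\varphi_0$ on $A$; since the support projection of $E_A$ sits in $A'\cap M=A$ and hence equals $1$ because $A$ is a MASA, $\varphi:=\varphi_0\circ E_A$ is a faithful normal state on $M$. On bounded subsets of $M$ the $s^*$-topology is then generated by the single seminorm $\|y\|_\varphi^\sharp:=\varphi(y^*y)^{1/2}+\varphi(yy^*)^{1/2}$, so the \so-neighborhoods $\mathcal V_k:=\{y\in M\mid \varphi(y^*y)<1/k^2\}$ are cofinal for detecting membership in $I_\omega$. For the forward direction, apply $(\eps,n)$-\so-pavability of $x$ to each $\mathcal V_k$ to get projections $p_{k,1},\ldots,p_{k,n}\in A$ summing to $1$, elements $a_k\in A$ with $\|a_k\|\leq\|x\|$, and projections $q_k\in M$ with $1-q_k\in\mathcal V_k$ and $\|q_k y_k q_k\|\leq\eps\|x\|$, where $y_k:=\sum_i p_{k,i}xp_{k,i}-a_k$. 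The canonical lifts $P_i:=(p_{k,i})^\bullet$, $a:=(a_k)^\bullet$, and $Q:=(q_k)^\bullet$ define elements of $M^\omega$ because $\ell^\infty(\N,A)\subset M^{0,\omega}$ and because $q_k\recht 1$ strongly suffices to place $(q_k)$ in $M^{0,\omega}$. From $\varphi(1-q_k)<1/k^2$ one deduces $\|1-q_k\|_\varphi^\sharp\recht 0$, so $(1-q_k)\in I_\omega$ and hence $Q=1$ in $M^\omega$. Therefore $Y:=\sum P_i xP_i-a=QYQ$ in $M^\omega$, and $QYQ$ admits $(q_k y_k q_k)\in M^{0,\omega}$ as a representative (obtained by componentwise multiplication of sequences all lying in $M^{0,\omega}$); since $\|q_k y_k q_k\|\leq\eps\|x\|$, the quotient norm gives $\|Y\|_{M^\omega}\leq\eps\|x\|$. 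For the backward direction, given $P_i,a\in A^\omega$ realizing $\|\sum P_i xP_i-a\|_{M^\omega}\leq\eps\|x\|$, lift to projection representatives $(p_{k,i})_k$ summing to $1$ (using abelianness of $A$ to adjust the last projection by an element of $I_\omega$) and to $(a_k)_k$ with $\|a_k\|\leq\|x\|$; by definition of the quotient norm there is another representative $(z_k)$ of $\sum P_i xP_i-a$ with $\sup_k\|z_k\|\leq\eps\|x\|+\eta$. Then $r_k:=y_k-z_k\in I_\omega$, so $\|r_k\|_\varphi^\sharp\recht 0$ along $\omega$, and the spectral projection $q_k$ of $r_k^*r_k+r_k r_k^*$ corresponding to $[0,\delta^2]$ satisfies $\|q_k r_k q_k\|\leq\delta$ while $1-q_k\in\mathcal V$ for $\omega$-most $k$ and any prescribed \so-neighborhood $\mathcal V$. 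Hence $\|q_k y_k q_k\|\leq\|z_k\|+\delta\leq\eps'\|x\|$ for sufficiently small $\eta,\delta$, establishing $(\eps',n)$-\so-pavability of $x$.

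For $3^\circ$, the forward direction extends the above argument from $x\in M$ to general $Y\in M^\omega\sa$: picking a representative $(y_k)_k$ with $\|y_k\|\leq\|Y\|$, uniform $(\eps,n)$-\so-pavability applied to each $y_k\in M\sa$ with neighborhood $\mathcal V_k$ produces $p_{k,i},a_k,q_k$, and the ultrapower formation of $2^\circ$-forward repeats verbatim (with the constant sequence $(x)$ replaced by $(y_k)$) to yield $\|\sum P_i Y P_i-a\|_{M^\omega}\leq\eps\|Y\|$. The converse follows from $2^\circ$-backward applied to each constant sequence $x\in M\subset M^\omega$. The technical heart of the argument, and its main obstacle, is the identification $Q=1$ in $M^\omega$ together with the matching norm estimate $\|QYQ\|_{M^\omega}\leq\lim_\omega\|q_k y_k q_k\|$; both rely on countable decomposability of $A$ to furnish a single faithful normal state $\varphi$ whose seminorm $\|\cdot\|_\varphi^\sharp$ controls $s^*$-convergence on bounded sets, so that a countable cofinal base of \so-neighborhoods translates the pavability hypothesis in $M$ into genuine $I_\omega$-smallness in Ocneanu's ultrapower.
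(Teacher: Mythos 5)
Your proof is correct and takes essentially the same route as the paper: part $1^\circ$ from Proposition \ref{prop.equiv-paving}, and parts $2^\circ$--$3^\circ$ by the same representative-sequence argument in Ocneanu's ultrapower, with $\varphi=\varphi_0\circ E_A$ metrizing the strong topology on bounded sets, compression by the projections $q_k$ producing an $I_\omega$-equivalent representative in the forward direction, and spectral projections of the $I_\omega$-difference of representatives in the backward one. The only minor imprecision is that $\mathcal V_k=\{y\in M\mid \varphi(y^*y)<1/k^2\}$ is a strong neighborhood of $0$ only relative to bounded sets, which is harmless since the elements $1-q$ furnished by {\so}-pavability are projections (the paper's own reformulation via $\varphi(1-q_m)\leq 2^{-m}$ rests on the same observation).
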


\begin{proof}
$1^\circ$ follows immediately from Proposition \ref{prop.equiv-paving}.

To prove $2^\circ$ and $3^\circ$, we assume that $A$ is countably decomposable and it suffices to prove the following two statements for given $0 < \eps < \eps'$ and $n \in \N$.
\begin{itemlist}
\item If $x \in M^\omega\sa$ is represented by the sequence $(x_m) \in M^{0,\omega}$ of self-adjoint elements $x_m \in M\sa$ satisfying $\|x_m\| \leq \|x\|$ and if every $x_m$ is $(\eps,n)$ \so-pavable, then $x$ is $(\eps,n)$ norm pavable over $A^\omega$.

\item If $x \in M\sa$ is $(\eps,n)$ norm pavable over $A^\om$, then $x$ is $(\eps',n)$ \so-pavable.
\end{itemlist}

Since $A$ is countably decomposable, we can fix a normal faithful state $\varphi$ on $A$ and still denote
by $\varphi$ its extension $\varphi \circ E_A$ to $M$. Note that the $s^*$-topology on the unit ball of $M\sa$
coincides with the {\so}-topology, both being implemented by the norm $\| \, \cdot \, \|_\varphi$.

We start by proving the first of the two statements above. For every $m$, the self-adjoint element $x_m$ is $(\eps,n)$ \so-pavable. So we can take a partition of $1$ with projections $p^m_1, ..., p^m_n\in A$, a projection $q_m\in M$ and an element $a_m \in A$ such that $\|a_m\| \leq \|x_m\| \leq \|x\|$ and such that
$\|q_m(\sum_i p_i^m x p^m_i - a_m)q_m\|\leq \varepsilon \|x\|$ and $\varphi(1-q_m)\leq 2^{-m}$. Since $(x_m)$ and $\ell^\infty(\mathbb N, A)$ are both
contained in $M^{0,\omega}$, the sequences $((1-q_m)p_i^m(x_m-a_m)p_i^m)_m$
and $(p_i^m(x_m - a_m)p_i^m(1-q_m))_m$ belong to $I_\omega$.

Thus, if we denote $a = (a_m)$ and $p_i=(p^m_i)_m \in A^\omega$, $1\leq i \leq n$,
then $p_1, ..., p_n$   is a partition of $1$ with projections in $A^\omega$ and $p_i (x - a) p_i$
coincides with $(q_mp^m_i(x_m - a_m)p_i^mq_m)_m$ in $M^\omega$. It follows that $\sum_i p_i (x-a)p_i$
coincides with $(q_m \sum_i p_i^m (x_m - a_m)p^m_i q_m)_m$ in $M^\omega$ and thus has norm majorized by $\varepsilon \|x\|$. So we have proved that $x$ is $(\eps,n)$ norm pavable over $A^\omega$.

To prove the second of the two statements above, let $x\in M\sa$ be $(\varepsilon, n)$ norm pavable over $A^\omega$ (as an element in $M^\omega$). Let $\delta > 0$ be arbitrary. We have to prove that there exists an $a' \in A$ with $\|a'\| \leq \|x\|$, a partition of $1$ with projections $e_1, ..., e_n\in A$ and a projection $q\in M$
such that $\varphi(1-q)\leq \delta$ and $\|q\sum_i e_i (x-a')e_iq\|\leq \varepsilon' \|x\|$.

Take projections $p_1, ..., p_n \in A^\omega$ and $a \in A^\om\sa$ so that $\|a\| \leq \|x\|$ and such that
$\sum_i p_i = 1$, $\|\sum_i p_i x p_i - a \| \leq \varepsilon \|x\|$. Represent the $p_i$ by sequences $(p^m_i)_m$ with projections $p^m_i\in A$ such that $\sum_i p_i^m=1$ for all $m$, and represent $a$ by a sequence $(a_m)_m$ with $a_m \in A\sa$ and $\|a_m\| \leq \|a\|$ for all $m$.

We conclude that there exists a sequence of self-adjoint elements $(y_m)_m \in I_\omega$ of norm at most $3\|x\|$ such that the sequence
$(b_m)_m=(\sum_i p_i^m(x-a_m)p^m_i - y_m)_m$ satisfies $\|b_m\|\leq \varepsilon \|x\|$ for all $m$.
Since $(y_m)_m\in I_\omega$, we have $\lim_\omega \varphi(|y_m|)=0$, so that there exists a neighborhood $\mathcal V$ of $\omega$
such that the spectral projection $q_m$ of $|y_m|$ corresponding to $[0, (\varepsilon'-\varepsilon) \|x\|]$ satisfies $\varphi(1-q_m)\leq \delta$,
for any $m\in \mathcal V$. Thus, for any such $m$, if we let $a' = a_m$, $e_i=p^m_i$ and $q=q_m$, then we have
$$
\|q\sum_i e_i (x-a')e_iq\|\leq \|q_mb_mq_m\|+\|q_my_mq_m\|
\leq \varepsilon \|x\|+(\varepsilon'-\varepsilon) \|x\| \leq \varepsilon'\|x\| \; .
$$
\end{proof}


\begin{conjecture}\label{conjecture}
\begin{enumlist}
\item Any MASA in a von Neumann algebra, $A\subset M$, with the property that there exists a normal conditional expectation of $M$ onto $A$,
has the {\so}-paving property (equivalently the {\app}-paving property).
Also, while the equivalence between \so- and \app-pavability for an arbitrary MASA $A$ in a von Neumann algebra $M$ is still to be clarified,
any MASA $A\subset M$ (not necessarily the range of a normal expectation) ought to satisfy both these properties.

\item Going even further, we expect that the paving size satisfies the estimate $\ns(x, \eps) \leq C \eps^{-2}$, $\forall x\in M\sa$, for some universal constant $C>0$, independent of $A \subset M$.
\end{enumlist}
\end{conjecture}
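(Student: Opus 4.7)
Since Conjecture \ref{conjecture} is stated as open in full generality, the plan is to verify it in all the cases where current tools give traction and to obtain the best possible paving sizes. The starting point is Theorem \ref{equiv.paving}, which reduces \so-paving (and the equivalent \app-paving in the presence of a normal expectation) to norm paving in the ultrapower $A^\omega \subset M^\omega$. This reduction is decisive because it converts an \so-problem on $M$ into a genuine norm paving problem in $M^\omega$, allowing the full strength of \cite{MSS13} and \cite{P13} to be brought to bear after suitable discretizations, with the resulting approximation errors absorbed into the ideal $I_\omega$.

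For part $1^\circ$, I would attack the listed cases as follows. The diagonal $\cD \subset \cB(\ell^2 \N)$ is \cite{MSS13} directly. For the diffuse MASA $L^\infty([0,1]) \subset \cB(L^2([0,1]))$, approximate a self-adjoint element $x$ in the \so-topology by its compressions $p_k x p_k$ to finite-dimensional corners coming from dyadic partitions $p_k \in L^\infty([0,1])$; each compression is a matrix paving problem handled uniformly in $k$ by \cite{MSS13}, producing an \app-paving that, via Theorem \ref{equiv.paving}, is equivalent to the required \so-paving. For Cartan subalgebras in amenable $M$, the Connes-Feldman-Weiss theorem exhibits the inclusion as an inductive limit of hyperfinite matricial Cartan pairs $A_k \subset M_k$ arising from finite equivalence relations; \cite{MSS13} handles each level, and the ultrapower framework absorbs the approximation error. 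The profinite-action case of a group-measure-space II$_1$ factor proceeds analogously via the finite quotients of the action, the Cartan structure ensuring compatibility of the approximations.

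For part $2^\circ$, the sharp quadratic estimate in the singular MASA case is the most delicate quantitative piece. The plan is to refine the random-partition argument of \cite{P13}: in $A^\omega \subset M^\omega$, one chooses a random partition of $1$ by projections in $A^\omega$ induced by a Haar-random unitary that the singularity of $A$ makes asymptotically free from $A$, and estimates $\|\sum_i p_i x p_i - E_{A^\omega}(x)\|$ directly. Whereas \cite{P13} yields $\eps^{-3}$ via a Hoeffding-type exponential inequality, I would substitute a sharp second-moment / non-commutative Khintchine estimate, giving $n < 1 + 16 \eps^{-2}$ by Chebyshev. Simultaneous paving of a finite tuple $y_1, \ldots, y_k$ in $M^\omega$ with the same bound on $n$ (independent of $k$) then follows because the random partition can be chosen once in $A^\omega$ itself via an ultrapower diagonal argument, rather than approximately in $A$. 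The matching lower bound is driven by the identity $\|\sum_i p_i x p_i\|_2^2 = \sum_i \|p_i x p_i\|_2^2$ applied to a centered element $x$ with $E_A(x) = 0$ (for example a Haar unitary in a diffuse complement of $A$), forcing operator norm at least $\|x\|/\sqrt{n}$ along any partition of $1$ with $n$ atoms.

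The main obstacle across all of the cases is synchronizing two scales of approximation: the \so-discretization that brings us to a matrix (or asymptotically free) regime, and the paving estimate itself. Lemma \ref{lem.uniform} is the central mechanism that makes this matching possible, since it upgrades pointwise \so-convergence to convergence uniform over the unit ball of a finite subalgebra, exactly what is needed to lift matrix paving bounds through the ultrapower construction. The full Conjecture \ref{conjecture} in the absence of a normal expectation remains the hardest point, because the ultrapower identification of Theorem \ref{equiv.paving} no longer directly applies and one would need a more intrinsic spatial argument, perhaps a Dixmier-style averaging adapted to MASAs that are not the range of any normal conditional expectation.
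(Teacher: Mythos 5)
You are reviewing a statement that is a \emph{conjecture}: the paper does not prove it, and neither does your proposal; what the paper actually offers is the ultrapower reduction (Theorem \ref{equiv.paving}) plus verifications in special cases (type I, Cartan in amenable algebras, profinite actions, singular MASAs) and the sharp lower bound. Your overall program follows the same skeleton, but at the three points where the real work happens your plan diverges from what is needed and, as sketched, would fail. First, in the profinite Cartan case, "proceeding analogously via the finite quotients" does not work directly: the finite-dimensional approximations coming from $\Gamma/\Gamma_n$ only approximate a given $x$ in $\|\cdot\|_2$, and norm-paving estimates do not pass through $\|\cdot\|_2$-limits. The paper has to route the argument through moment estimates $|\tau(y^k)|\leq \eps^k+\delta$ for all $k\leq m$, a comparison of traces on a large spectral projection $q$, and the spectral radius formula to recover an operator-norm bound in $M^\omega$; this mechanism is absent from your plan and Lemma \ref{lem.uniform} does not supply it. Second, for the singular case your "second-moment/non-commutative Khintchine plus Chebyshev" scheme controls $L^2$-quantities (and hence at best measure of spectral excess), whereas Theorem \ref{thm.singular-case} asserts a genuine operator-norm bound $\|\sum_j p_j x p_j\|\leq \eps\|x\|$ in the ultraproduct with $n<1+16\eps^{-2}$. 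The paper gets this deterministically: a $2\times 2$ dilation turns each $x_k$ into a projection $Q_k$ with expectation $\tfrac12$ in an ultraproduct of singular MASA inclusions, \cite[Theorem 4.1.(a)]{P13} produces one diffuse $B_0\subset\bAtil$ free from all the $Q_k$ simultaneously, and Voiculescu's exact computation \cite[Example 2.8]{V86} gives $\|P_jQ_kP_j-\tfrac12 P_j\|\leq 2/\sqrt n$ in operator norm. Without the dilation-to-projections step and the exact free-compression norm, the $\eps^{-2}$ rate in operator norm is not reached.

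Third, your lower bound is not correct as stated. The identity $\|\sum_k p_k x p_k\|_2^2=\sum_k\|p_kxp_k\|_2^2$ by itself forces nothing (for good partitions this quantity is exactly what becomes small), and the element you invoke --- "a Haar unitary in a diffuse complement of $A$", i.e.\ something free from $A$ --- need not exist inside a given II$_1$ factor containing the given MASA; freeness from a MASA is a strong structural assumption, not something available for, say, the Cartan pair $D\subset R$. Proposition \ref{prop.lower-bound} substitutes for this missing freeness an explicit self-adjoint unitary built from a conference matrix $C\in M_r(\R)$ and partial isometries between $r$ equal subprojections of $A$; the combinatorics of $C$ (zero diagonal, $\pm1$ entries, $(r-1)^{-1/2}C$ unitary) together with the operator Cauchy--Schwarz inequality $\sum_k T_k^2\geq \tfrac1n(\sum_kT_k)^2$ yields $\|\sum_k p_kxp_k\|_2^2\geq \tfrac{1}{r-1}(\tfrac rn-1)>\eps^2$ for \emph{every} partition, which is the statement actually needed. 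Finally, in the type I case your compression sketch handles $L^\infty([0,1])\subset\cB(L^2([0,1]))$, but the general type I MASA has a direct-integral component $\cB\ovt L^\infty(X)$, and there the paper needs the measurable-selection version of \cite{MSS13} (Lemma \ref{lem.type-I}, via von Neumann's selection theorem) to choose the paving projections measurably in $x\in X$; this step should be made explicit rather than absorbed into "handled uniformly". None of this, of course, changes the status of Conjecture \ref{conjecture} itself, which remains open beyond these cases.
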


\begin{remark}
$1^\circ$ There is much evidence for $1^\circ$ in the above conjecture. By \ref{equiv.paving}.3$^\circ$ and the fact that the ultrapower of
$\mathcal D \subset \mathcal B(\ell^2\mathbb N)$ coincides with $\mathcal D \subset \mathcal B(\ell^2\mathbb N)$, {\so}-pavability for this inclusion
is equivalent to
Kadison-Singer paving, proved to hold true by Marcus-Spielman-Srivastava in \cite{MSS13}. It was already noticed in \cite{P13}
that {\so}-pavability over the Cartan MASA of the hyperfinite II$_1$ factor, $D\subset R$,
is equivalent to pavability of $\mathcal D \subset \mathcal B(\ell^2\mathbb N)$, and thus holds true by \cite{MSS13}. In fact, more cases of the conjecture can be
deduced from \cite{MSS13}. Thus, we notice in Section 3 that any MASA in a
type I von Neumann algebra (such as a diffuse MASA in $\mathcal B(\ell^2\mathbb N)$) satisfy both {\so}- and {\app}-pavability.
Then in Section 4, we use \cite{MSS13} to prove that any Cartan MASA in an amenable von Neumann algebra, or in a
group measure space II$_1$ factor arising from a
free ergodic profinite action, has the {\so}-pavability property. On the other hand, the conjecture had  already been checked for singular MASAs in II$_1$ factors in \cite{P13}, and
Cyril Houdayer and Yusuke Isono pointed out that, modulo some obvious modifications, the proof in \cite{P13} works
as well for any singular MASA $A$ in an arbitrary von Neumann algebra $M$, once $A$ is the range of normal conditional expectation from $M$. Finally in Remark \ref{rem.other-examples}, we prove that \so-pavability also holds for a certain class of MASAs that are neither Cartan, nor singular.

$2^\circ$ The estimate on the paving size $\ns(x,\eps) \sim \eps^{-2}$, $\forall x\in M\sa$, in point $2^\circ$ of the conjecture is more speculative, and there is less evidence for it.
Based on results in \cite{P13}, we will show in Theorem \ref{thm.singular-case} that this estimate does hold true for singular MASAs. We will also show in Proposition \ref{prop.lower-bound} that this is the best
one can expect for the {\so}-paving size of any MASA in a II$_1$ factor and thus, since $\ns(D\subset R, \eps)=\text{\rm n}(\mathcal D \subset \mathcal B(\ell^2\mathbb N),\eps)$,
for the paving size in the Kadison-Singer problem as well (a fact already shown in \cite{CEKP07}). For the inclusions
$\mathcal D \subset \mathcal B(\ell^2\mathbb N)$, the order of magnitude of the $\eps$-pavings obtained in \cite{MSS13}
is $C\eps^{-4}$, but the techniques used there seem to allow obtaining the paving size $C\eps^{-2}$.
However, in order to prove Conjecture \ref{conjecture} in its full generality, in particular unifying the singular and the
Cartan MASA cases (including the diagonal inclusions $D_k \subset \mathcal B(\ell^2_k)$, $2\leq k \leq \infty$), which are quite different in nature,
a new idea may be needed.

$3^\circ$ The $(\eps, n)$ {\so}-paving in the case of a MASA $A\subset M$ with a normal conditional expectation $E_A:M \rightarrow A$ and a normal faithful state $\varphi$ on $M$
with $\varphi \circ E_A =\varphi$, should be compared with $(\eps,n)$ $L^2$-paving in the Hilbert norm $\| \, \cdot \, \|_\varphi$, which for $x\in M$, $E_A(x)=0$, requires
the existence of a partition of $1$ with projections $p_1, ..., p_n \in A$ such that $\|\sum_i p_i x p_i \|_\varphi \leq \eps \|x\|_\varphi$. This condition
is obviously weaker than {\so}-paving, with $\text{\rm n}(x, \eps) \geq \ns(x,\eps)$ bounded from below by
the $L^2$-paving size of $x$, $\forall x\in M\sa$. It was shown in \cite[Theorem 3.9]{P13} to always occur, with
paving size majorised by $\eps^{-2}$ (in fact the proof in \cite{P13} is for MASAs in II$_1$ factors,
but the same proof works in the general case; see also \cite[Theorem A.1.2]{P95} in this respect). The proof of Proposition \ref{prop.lower-bound} at the end of this paper shows that
the paving size is bounded from below by $\eps^{-2}$ for all MASAs in II$_1$ factors.
\end{remark}

\section{Paving over MASAs in type I von Neumann algebras}

Marcus, Spielman and Strivastava have proved in \cite{MSS13}  that for every self-adjoint matrix $T \in M_k(\C)$ with zeros on the diagonal and every $\eps > 0$, there exist $r$ projections $p_1,\ldots,p_r \in D_k(\C)$ with $r \leq (6/\eps)^4$, $\sum_{i=1}^r p_i = 1$ and $\|p_i T p_i \| \leq \eps \|T\|$ for all $i$ (see also \cite{Ta13,Va14} for alternative presentations of the proof). Thus, if $\mathcal D$ is the diagonal
MASA in $\mathcal B=\mathcal B(\ell^2\N)$, then $\mathcal D \subset \mathcal B$ has the paving property, with
$\rn(\mathcal D \subset \mathcal B; x, \varepsilon) \leq 12^4 \varepsilon^{-4}$, $\forall x=x^*\in \mathcal B$.

In this section, we deduce from this that any MASA $A$ in a type I von Neumann algebra $M$ has the {\it so}- and {\it app}-paving property.

We also prove that a MASA $A$ in a von Neumann algebra $M$ with separable predual has the norm paving property if and only if $M$ is of type I and there exists a normal conditional expectation of $M$ onto $A$.

We start by deducing the following lemma from \cite{MSS13}.

\begin{lemma}\label{lem.type-I}
Let $(X,\mu)$ be a standard probability space and $\cB = M_k(\C)$ or $\cB = \cB(\ell^2 \N)$ with the diagonal MASA $\cD \subset \cB$. Consider the unique normal conditional expectation $E$ of $\cB \ovt L^\infty(X)$ onto $\cD \ovt L^\infty(X)$. If $T \in \cB \ovt L^\infty(X)$ is a self-adjoint element with $E(T) = 0$ and if $\eps > 0$, there exist $r$ projections $p_1,\ldots,p_r \in \cD \ovt L^\infty(X)$ with $r \leq (6/\eps)^4$, $\sum_{i=1}^r p_i = 1$ and $\|p_i T p_i \| \leq \eps \|T\|$ for all $i$.
\end{lemma}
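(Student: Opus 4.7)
The plan is to apply the matrix Marcus--Spielman--Srivastava (MSS) paving result fibrewise to each $T(x)$, and to assemble the pointwise partitions into a single partition in $\cD \ovt L^\infty(X)$ via a measurable selection argument. Identify $\cB \ovt L^\infty(X)$ with the von Neumann algebra of essentially bounded, weakly measurable functions $T : X \to \cB$, and $\cD \ovt L^\infty(X)$ with those taking values in $\cD$. Fix the standard basis $(e_n)_{n \in I}$ of the underlying Hilbert space diagonalising $\cD$, where $I = \{1,\ldots,k\}$ or $I = \N$. Put $N := \lceil (6/\eps)^4 \rceil$. An $N$-tuple of projections $p_1,\ldots,p_N$ in $\cD \ovt L^\infty(X)$ summing to $1$ is the same as a measurable map $f : X \to \{1,\ldots,N\}^I$, via $p_i(x) = p_i^{f(x)}$, where $p_i^f$ denotes the projection onto $\lspan\{e_n : f(n) = i\}$. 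The hypothesis $E(T) = 0$ ensures that $T(x)$ is self-adjoint with vanishing diagonal for a.e.\ $x$, and so MSS supplies at least one admissible value $f(x) \in \{1,\ldots,N\}^I$ with $\|p_i^{f(x)} T(x) p_i^{f(x)}\| \leq \eps \|T\|$ for every $i$. The task is to pick $f$ measurably in $x$.

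In the matrix case $\cB = M_k(\C)$, the parameter set $\{1,\ldots,N\}^I$ is finite, say $\{f_1,\ldots,f_L\}$. Each
\[
X_\ell \;:=\; \Bigl\{x \in X : \max_{1 \leq i \leq N} \|p_i^{f_\ell} T(x) p_i^{f_\ell}\| \leq \eps \|T\|\Bigr\}
\]
is Borel, and $\bigcup_\ell X_\ell = X$ modulo a null set by MSS. Setting $Y_\ell := X_\ell \setminus \bigcup_{\ell'<\ell} X_{\ell'}$ and $p_i := \sum_\ell \kar_{Y_\ell} \otimes p_i^{f_\ell}$ yields projections in $\cD \ovt L^\infty(X)$ that partition $1$ and satisfy $\|p_i T p_i\| \leq \eps \|T\|$.

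In the case $\cB = \cB(\ell^2 \N)$, the parameter space $\Omega := \{1,\ldots,N\}^\N$ is a compact Polish space and I invoke the Kuratowski--Ryll-Nardzewski measurable selection theorem. For $\xi, \eta \in \ell^2 \N$ with finitely supported rational coefficients, the map $f \mapsto p_i^f \xi$ depends only on the restriction of $f$ to $\operatorname{supp}(\xi)$ and is therefore locally constant on $\Omega$; consequently $(x,f) \mapsto \langle T(x) p_i^f \xi,\, p_i^f \eta\rangle$ is continuous in $f$ and measurable in $x$, hence jointly measurable. Taking a countable supremum over such $\xi, \eta$ shows that $(x,f) \mapsto \|p_i^f T(x) p_i^f\|$ is jointly measurable on $X \times \Omega$ and lower semicontinuous in $f$. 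Hence the multifunction
\[
x \mapsto F(x) \;=\; \Bigl\{ f \in \Omega : \max_{1 \leq i \leq N} \|p_i^f T(x) p_i^f\| \leq \eps \|T\|\Bigr\}
\]
has closed (hence compact) values, is nonempty for a.e.\ $x$ by MSS, and has measurable graph in $X \times \Omega$, which is exactly the hypothesis needed for Kuratowski--Ryll-Nardzewski. A measurable selector $x \mapsto f(x)$ then defines $p_i(x) := p_i^{f(x)}$, producing the required paving in $\cD \ovt L^\infty(X)$.

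The main obstacle I anticipate is the selection step in the $\cB(\ell^2\N)$ case, more precisely a careful verification of the joint measurability of $(x,f) \mapsto \|p_i^f T(x) p_i^f\|$; once that is in place the conclusion follows immediately from the fibrewise MSS statement together with a standard measurable selection theorem. In particular, because $T$ is given only as a weakly measurable operator-valued function on $X$, the argument must pass through a fixed countable dense set of test vectors, which is the reason for restricting to $\xi,\eta$ with finitely-supported rational coefficients in the above.
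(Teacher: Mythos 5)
Your proposal is correct and follows essentially the same route as the paper: represent $T$ as a measurable $\cB$-valued function, parametrize $r$-fold partitions of $\cD$ by the compact Polish space $\{1,\ldots,r\}^{\N}$, apply the Marcus--Spielman--Srivastava paving fibrewise, and extract a measurable selector (the paper reduces the matrix case to $\cB(\ell^2\N)$ rather than treating it separately, but that is immaterial). The one small slip is the citation: graph measurability of your multifunction is the hypothesis of the von Neumann/Jankov (equivalently Aumann) selection theorem --- which is exactly what the paper invokes --- rather than of Kuratowski--Ryll-Nardzewski, which requires weak measurability; with that substitution your argument coincides with the paper's proof.
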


\begin{proof}
It suffices to consider $\cB = \cB(\ell^2 \N)$. Fix a self-adjoint $T \in \cB \ovt L^\infty(X)$ with $E(T) = 0$ and $\eps > 0$. Denote by $r$ the largest integer satisfying $r \leq (6/\eps)^4$. We represent $T$ as a Borel function $T : X \recht \cB$ satisfying $\|T(x)\| \leq \|T\|$ and $E(T(x))=0$ for all $x \in X$. Define $Y$ as the compact Polish space $Y := \{1,\ldots,r\}^\N$. For every $y \in Y$ and $i \in \{1,\ldots,r\}$, we denote by $p^y_i \in \cD$ the projection given by $p^y_i(k) = 1$ if $y(k) = i$ and $p^y_i(k) = 0$ if $y(k) \neq i$. Clearly, the projections $p^y_1,\ldots,p^y_r$ with $y \in Y$ describe precisely all partitions of $\cD$. Also, for every $i \in \{1,\ldots,r\}$, the map $y \mapsto p^y_i$ is strongly continuous.

Define the Borel map
$$\cV : Y \times X \recht [0,+\infty) : \cV(y,x) = \max_{i = 1,\ldots,r} \|p^y_i \, T(x) \, p^y_i\|$$
and the Borel set $Z \subset Y \times X$ given by $Z := \{(y,x) \in Y \times X \mid \cV(y,x) \leq \eps \|T\|\}$. For every $x \in X$, we have that $T(x) \in \cB$ with $\|T(x)\| \leq \|T\|$ and $E(T(x)) = 0$. So, by \cite{MSS13}, for every $x \in X$, there exists a $y \in Y$ such that $(y,x) \in Z$. Denoting $\pi : Y \times X \recht X : \pi(y,x) = x$, this means that $\pi(Z) = X$. By von Neumann's measurable selection theorem (see \cite{vN39} or \cite[Theorem 18.1]{Ke95}), we can take a Borel set $X_0 \subset X$ and a Borel function $F : X_0 \recht Y$ such that $\mu(X \setminus X_0) = 0$ and $(F(x),x) \in Z$ for all $x \in X_0$.

The Borel functions $p_i : X_0 \recht \cD : p_i(x) = p_i^{F(x)}$ then define a partition $p_1,\ldots,p_r$ of $\cD \ovt L^\infty(X)$ with the property that $\|p_i T p_i \| \leq \eps \|T\|$ for all $i$.
\end{proof}

\begin{proposition}
Let $M$ be a von Neumann algebra of type I with separable predual and $A \subset M$ an arbitrary MASA. Then $A \subset M$ has both the {\so}- and the {\app}-paving properties.

More precisely, for every $x \in M\sa$ and $\eps > 0$, we have that $\ns(x,\eps) \leq 12^4 \eps^{-4}$. Also, there exists a strongly dense $*$-subalgebra $M_0 \subset M$ with $A \subset M_0$ such that for every $x \in (M_0)\sa$ and $\eps > 0$, we have that $\rn(x,\eps) \leq 12^4 \eps^{-4}$.
\end{proposition}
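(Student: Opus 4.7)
The plan is to reduce via the type I structure theorem to two model cases and apply Lemma \ref{lem.type-I} in each. Every type I von Neumann algebra with separable predual is a countable direct sum of pieces $\cB(H_k) \ovt L^\infty(X_k)$ with $H_k$ separable and $X_k$ standard; since paving decomposes compatibly over direct sums, we may assume $M = \cB(H) \ovt L^\infty(X)$. Any MASA $A \subset M$ contains the center $1 \ovt L^\infty(X)$ and disintegrates as a measurable field $(A_t)_{t \in X}$ of MASAs in $\cB(H)$, each a direct sum of an atomic $\ell^\infty$ part and a diffuse $L^\infty([0,1])$ part. Using von Neumann's measurable selection theorem to choose unitaries in a Borel fashion, and splitting $X$ according to the type of $A_t$, we further reduce to the two standard cases: (i) atomic, $A = \cD \ovt L^\infty(X)$ with $\cD = \ell^\infty(\N) \subset \cB(\ell^2 \N)$, and (ii) diffuse, $A = L^\infty([0,1]) \ovt L^\infty(X) \subset \cB(L^2[0,1]) \ovt L^\infty(X)$.

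In case (i), Lemma \ref{lem.type-I} applied to $x - E(x)$, with $E$ the normal conditional expectation onto $A$, yields norm paving with $r \leq (12/\eps)^4 = 12^4 \eps^{-4}$ projections, so $M_0 = M$ and all three properties follow. Case (ii) is the substantive one, which I would address by discretization. Partition $[0,1]$ into $N$ equal intervals $I_j$, let $\pi_j$ be the rank-one projection onto the unit vector $\sqrt{N}\chi_{I_j}$, and set $e_N = \sum_j \pi_j \otimes 1 \in M$. Then $e_N M e_N \cong M_N(\C) \ovt L^\infty(X)$ with compressed MASA $\cD_N \ovt L^\infty(X)$, matching the hypotheses of Lemma \ref{lem.type-I}. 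Given $x \in M\sa$ and $\eps > 0$, applying the lemma to $e_N x e_N$ produces a partition $P_1, \ldots, P_r \in \cD_N$ with $r \leq 12^4\eps^{-4}$ and $\|\sum_i P_i (e_N x e_N) P_i - E_{\cD_N}(e_N x e_N)\| \leq \eps\|x\|$. Each $P_i = \sum_{j \in J_i}\pi_j$ lifts to $p_i := \sum_{j \in J_i}\chi_{I_j} \in L^\infty([0,1]) \subset A$, yielding a partition of $1$ in $A$; similarly $E_{\cD_N}(e_N x e_N)$ lifts to some $a \in A$ of the same norm via $\pi_j \leftrightarrow \chi_{I_j}$. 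As the $p_i$ and $a$ are piecewise-constant on the partition, they commute with $e_N$ and satisfy $p_i e_N = P_i$; setting $q = e_N$ then gives $\|q(\sum_i p_i x p_i - a)q\| \leq \eps\|x\|$. Choosing $N$ large so that $1-e_N$ lies in any prescribed so-neighborhood establishes so-paving with the uniform bound $\ns(x,\eps) \leq 12^4 \eps^{-4}$, and the approximants $y_N := e_N x e_N \to x$ (each norm-paveable in its compression with the same size) simultaneously give app-paving.

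For the dense $*$-subalgebra $M_0$ supporting genuine norm paving, I would take $M_0 := A + \mathcal{K}$, where $\mathcal{K}$ is a suitable $*$-ideal of fiberwise compact operators in $M$, chosen so that $\|(1-e_N) K (1-e_N)\|$ together with the off-diagonal blocks $\|e_N K (1-e_N)\|$ and $\|(1-e_N) K e_N\|$ tend to zero in norm uniformly over $K \in \mathcal{K}$ (for instance, the norm closure of $\mathcal{K}(H) \otimes L^\infty(X)$ inside $M$). For $x = a + K \in (M_0)\sa$ and $N$ large, the crucial additional observation is that $\|E_{\cD_N}(e_N K e_N)\| \to 0$ in norm: since the unit vectors $\sqrt{N}\chi_{I_j}$ converge weakly to zero uniformly in $j$, compact $K$ forces $\pi_j K \pi_j \to 0$ uniformly, so the ``diagonal'' piece vanishes. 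Combining this with the block-wise bounds on $K - e_N K e_N$, the so-paving from case (ii) upgrades to a full norm paving, with $a' = a$ and the same size $r \leq 12^4 \eps^{-4}$. The main obstacle, in my view, is isolating the correct $\mathcal{K}$ so that all these norm convergences are uniform in $K$ -- likely demanding an approximation argument by elementary tensors $K_0 \otimes f$ -- together with carrying out the Borel selection of unitaries in the initial reduction in a fully measurable way.
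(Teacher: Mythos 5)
Your structural reduction and your treatment of {\so}-paving in the diffuse case are essentially the paper's own argument: compress by the projection $e_N$ onto step functions, identify $e_N M e_N$ with $M_N(\C)\ot L^\infty(X)$ whose compressed MASA is the diagonal, apply Lemma \ref{lem.type-I}, lift the paving projections to piecewise-constant projections in $A$, and take $q=e_N$. This part is correct and gives $\ns(x,\eps)\leq 12^4\eps^{-4}$ exactly as in the paper.

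The gap is in how you obtain {\app}-paving and the dense subalgebra $M_0$. Your claim that the approximants $y_N=e_Nxe_N$ are ``norm-paveable in their compression with the same size'' and hence give {\app}-paving does not work: {\app}-pavability requires the approximants to be norm pavable over $A$ inside $M$, whereas what you have is paving of $e_Nxe_N$ over the compressed MASA $\cD_N\ot L^\infty(X)$, which is \emph{not} contained in $A$ (the rank-one projections onto $\sqrt{N}\chi_{I_j}$ are not multiplication operators). In fact $e_Nxe_N$ is in general not norm pavable over $A$ at all: any $b\in A$ that is uniformly close to $\sum_i p_i(e_Nxe_N)p_i$ must have small $(1-e_N)$-corner, hence small norm, while its $e_N$-corner has to approximate $E_{\cD_N}(e_Nxe_N)$, which need not be small; and Proposition \ref{prop.equiv-paving} cannot be invoked here because the diffuse MASA is not the range of a normal conditional expectation. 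The paper instead gets {\app}-paving from the dense subalgebra: replacing $[0,1]$ by $\T$, it takes $M_0$ to be the $*$-algebra generated by $L^\infty(\T)$ and the rational rotation unitaries, so that every self-adjoint element of $M_0$ lies in some $L^\infty(\T)\rtimes(\Z/N\Z)\cong M_N(\C)\ot L^\infty$ containing $A$ as its diagonal, where Lemma \ref{lem.type-I} gives genuine $(\eps,12^4\eps^{-4})$ norm paving over $A$; {\app}-paving then follows by Kaplansky density. Your alternative $M_0=A+\mathcal{K}$ (fiberwise compacts) could likely be completed in the same spirit: since $\sum_i p_i(a+K)p_i-a=\sum_i p_iKp_i$, it suffices to pave $K$, and for a \emph{fixed} $K$ the terms $(1-e_N)K$, $K(1-e_N)$ and $E_{\cD_N}(e_NKe_N)$ do tend to zero in norm, so the uniformity over $\mathcal{K}$ you worry about is not needed ($x$ is fixed before $N$ is chosen). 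But you would still have to prove that $A$ embeds isometrically into the quotient of $M$ by the ideal $\mathcal{K}$, so that $\|a\|\leq\|a+K\|$ and $\|K\|\leq 2\|x\|$, in order to control the paving constant and the norm bound on the paving element; as written, this part is a sketch rather than a proof.
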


\begin{proof}
Fix an arbitrary MASA $A \subset M$. There exist standard probability spaces $(X_k,\mu_k)_{k \in \N}$ and $(X_d,\mu_d)$, $(X_c,\mu_c)$ such that, writing $A_k = L^\infty(X_k)$ and similarly $A_d$, $A_c$, the MASA $A \subset M$ is isomorphic to a direct sum of MASAs of the form
\begin{align}
& D_k(\C) \ot A_k \subset M_k(\C) \ot A_k \; , \quad \ell^\infty(\N) \ovt A_d \subset \cB(\ell^2(\N)) \ovt A_d \quad\text{and}\label{eq.discrete-masa}\\
& L^\infty([0,1]) \ovt A_c \subset \cB(L^2([0,1])) \ovt A_c \; .\notag
\end{align}
For the first two of these MASAs, by Lemma \ref{lem.type-I}, we get that $\rn(x,\eps) \leq 12^4 \eps^{-4}$ for every self-adjoint element $x$.

For the rest of the proof, we consider $M = \cB(L^2([0,1])) \ovt L^\infty(X)$ and $A = L^\infty([0,1]) \ovt L^\infty(X)$ for some standard probability space $(X,\mu)$. Fix $x \in M\sa$ and $\eps > 0$. Let $n$ be the largest integer satisfying $n \leq 12^4 \eps^{-4}$. We prove that $x$ is $(\eps,n)$ {\so}-pavable. Choose an $\so$-neighborhood $\cV$ of $0$ in $M$. For every $r > 0$, denote by $q_r \in \cB(L^2([0,1]))$ the orthogonal projection onto the subspace $H_r \subset L^2([0,1])$ defined as
$$H_r = \{\xi \in L^2([0,1]) \mid \xi\;\;\text{is constant on every interval}\;\; [r^{-1}(i-1),r^{-1} i) \; , \; \forall i = 1,\ldots,r\} \; .$$
Define $\xi_{r,i} = \sqrt{r} \chi_{[r^{-1}(i-1),r^{-1} i)}$, so that $(\xi_{r,i})_{i=1,\ldots,r}$ is an orthonormal basis of $H_r$.

When $r \recht \infty$, we have that $q_r \recht 1$ strongly. So we can fix $r$ large enough such that $1 - (q_r \ot 1) \in \cV$. Denote by $e_i \in L^\infty([0,1])$ the projection given by $e_i = \chi_{[r^{-1}(i-1),r^{-1} i)}$. Define the vector functionals $\om_{ij} \in \cB(L^2([0,1]))_*$ given by $\om_{ij}(T) = \langle T \xi_{r,i},\xi_{r,j} \rangle$. Define $a \in A$ given by
$$a = \sum_{i=1}^r e_i \ot (\om_{ii} \ot \id)(x) \; .$$
By construction, $\|a\| \leq \|x\|$.

Define the isometry $V \in \cB(\C^r,L^2([0,1]))$ given by $V(\delta_i) = \xi_{r,i}$ for all $i=1,\ldots,r$. Define $y \in M_r(\C) \ot L^\infty(X)$ given by $y := (V^* \ot 1) x (V \ot 1)$. We also put $b = (V^* \ot 1)a (V \ot 1)$. Denoting by $E : M_r(\C) \ot L^\infty(X) \recht D_r(\C) \ot L^\infty(X)$ the natural conditional expectation, we have $E(y) = b$. By Lemma \ref{lem.type-I}, we thus find projections $f_1,\ldots,f_n \in D_r(\C) \ot L^\infty(X)$ such that $f_1 + \cdots + f_n = 1$ and
$\|f_k (y - b) f_k\| \leq \eps\|y\| \leq \eps \|x\|$ for all $k = 1,\ldots,n$.

Define the projections $a_{ki} \in L^\infty(X)$ such that $f_k = \sum_{i=1}^r E_{ii} \ot a_{ki}$. Then define the projections $p_k \in A$ given by $p_k = \sum_{i=1}^r e_i \ot a_{ki}$. By construction, we have
$$(V^* \ot 1) p_k x p_k (V \ot 1) = f_k y f_k \quad\text{for all}\;\; k=1,\ldots,n \; .$$
Therefore,
$$\Bigl\|(q_r \ot 1) \Bigl(\sum_{k=1}^n p_k x p_k - a\Bigr) (q_r \ot 1)\Bigr\| = \Bigl\| \sum_{k=1}^n (V^* \ot 1) p_k x p_k (V \ot 1) - b \Bigr\|
= \Bigl\|\sum_{k=1}^n f_k y f_k - b\Bigr\| \leq \eps\|x\| \; .$$
Since $1 - (q_r \ot 1) \in \cV$, we have shown that $x$ is $(\eps,n)$ {\so}-pavable.

For the final part of the proof, for notational convenience, we replace the interval $[0,1]$ by the circle $\T$. We define $M_0 \subset \cB(L^2(\T))$ as the $*$-algebra generated by $L^\infty(\T)$ and the periodic rotation unitaries. By construction, $M_0 \subset M$ is a dense $*$-subalgebra containing $A$. By Lemma \ref{lem.type-I}, every $x \in (M_0)\sa$ is $(\eps,12^4\eps^{-4})$-pavable for all $\eps > 0$.
\end{proof}

We finally prove that for a MASA $A$ in a von Neumann algebra $M$ with separable predual, the classical Kadison-Singer paving holds if and only if $M$ is of type I and $A$ is the range of a normal conditional expectation.

\begin{theorem}\label{thm.norm-paving}
Let $M$ be a von Neumann algebra with separable predual and $A \subset M$ a MASA. Then $A \subset M$ satisfies the norm paving property if and only if $M$ is of type I and $A$ is the range of a normal conditional expectation.

Also, unless $M$ is of type I and $A$ is the range of a normal conditional expectation, there exist singular conditional expectations of $M$ onto $A$.
\end{theorem}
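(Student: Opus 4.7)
The equivalence splits into two implications, the $(\Rightarrow)$ direction being substantially harder than $(\Leftarrow)$, and the singular-CE assertion is deduced separately.

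For $(\Leftarrow)$, I would invoke the structure theorem for type~I von Neumann algebras with separable predual, writing $M$ as a direct sum of algebras of the form $\cB(H_k) \ovt L^\infty(X_k)$ with $H_k$ separable. The MASA $A$ splits compatibly; since the diffuse MASA $L^\infty([0,1]) \subset \cB(L^2([0,1]))$ admits no normal conditional expectation (a normal CE would force the ``diagonal'' of a continuous orthonormal basis to exist as an element of $L^\infty$, which contradicts additivity), the normality hypothesis on $E_A$ rules out diffuse summands, leaving only atomic-diagonal MASAs of the form treated in Lemma~\ref{lem.type-I}. This yields norm paving with $\rn(x,\eps) \leq 12^4 \eps^{-4}$.

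For $(\Rightarrow)$, I would first deduce the existence of a normal CE from the paving hypothesis. Order the finite partitions of unity by projections in $A$ by refinement and consider the normal UCP maps $\Phi_p(x) = \sum_i p_i x p_i$. The key identities $\Phi_q \circ \Phi_p = \Phi_q$ for $q$ refining $p$, and $\Phi_q(a) = a$ for every $a \in A$, ensure that any $(\eps,n)$-paving $\|\Phi_p(x) - a\| \leq \eps\|x\|$ propagates to every refinement $q$: $\|\Phi_q(x) - a\| = \|\Phi_q(\Phi_p(x) - a)\| \leq \eps\|x\|$. Thus $(\Phi_p(x))_p$ is norm-Cauchy for every $x$; its limit $E_A(x) \in A$ defines a CE, whose normality follows by combining the normality of each $\Phi_p$ with monotone convergence arguments on $M^+$. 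The hard part will be the next step: showing that $M$ must be of type~I. The plan is contrapositive --- inside any non-type-I direct summand, produce a self-adjoint $x$ with $E_A(x) = 0$ and $\inf_p \|\Phi_p(x)\| > 0$, contradicting paving. In a type~II$_1$ summand, the plausible candidate is $x = u + u^*$ for a unitary $u$ with $E_A(u) = 0$ whose $A$-$A$-bimodule content over $M$ is too widely spread (reflecting the infinite $A$-$A$-bimodule dimension of $M$) to be annihilated in $\|\cdot\|_\infty$ by any single finite partition of $A$; type~II$_\infty$ and type~III summands would be reduced to type~II$_1$ via central cut-downs and continuous-core decompositions respectively.

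For the final singular-CE statement, assume we are outside the nice case. I would start from any CE $E : M \to A$ (existence is classical for MASAs) and apply Tomiyama's decomposition $E = E_n + E_s$ into normal and singular CP parts. If $A$ admits no normal CE, then $E_s \neq 0$ for every such $E$. An averaging argument over the weak$^*$-compact convex set of all CEs, combined with a Hahn--Banach extraction in the $A^\perp$ direction selected by the non-normal parts, should assemble a CE with vanishing normal component --- the desired singular CE. In the remaining case where $M$ is not type~I but a normal CE onto $A$ does exist, I would use the failure of paving produced by the $(\Rightarrow)$ direction to obtain a functional in $A^\perp$ witnessed by a non-pavable element, and feed it into the same averaging construction.
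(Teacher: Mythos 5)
Your ``$\Leftarrow$'' direction is essentially the paper's. The ``$\Rightarrow$'' direction and the singular-expectation statement, however, contain genuine gaps. The refinement/Cauchy argument with $\Phi_p(x)=\sum_i p_ixp_i$ does produce a conditional expectation and even shows it is the \emph{unique} one, but normality does not follow from ``normality of each $\Phi_p$ plus monotone convergence'': the convergence $\Phi_p(x)\recht E(x)$ is pointwise in $x$ and cannot be made uniform on the unit ball (no single finite partition paves all contractions), so for a normal state $\om$ on $A$ the functional $\om\circ E$ is only a pointwise limit of the normal functionals $\om\circ\Phi_p$, and such limits can be singular. This is exactly where the paper has to invoke a nontrivial external result, namely \cite[Corollary 3.3]{AS12}: a MASA admitting a unique conditional expectation automatically has it normal. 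Without this (or a substitute) you also cannot rule out the type I case with diffuse part, $L^\infty([0,1])\subset\cB(L^2([0,1]))$, since your contrapositive plan only addresses non-type-I summands.

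The contrapositive plan itself is not substantiated, and the proposed witness is dubious: if $u\in\cN_M(A)$ is a normalizing unitary with $E_A(u)=0$ and $\si=\Ad(u)$ is aperiodic on $A$, one can choose three projections $p_1,p_2,p_3\in A$ with $p_i\si(p_i)=0$, whence $\sum_i p_iup_i=0$ and $u+u^*$ is paved exactly; and if instead $u$ is ``free'' from $A$, the singular-MASA results of Section 5 show such elements behave well under paving. In fact the paper never exhibits a non-pavable element: it derives ``paving $\Rightarrow$ type I'' from uniqueness plus normality of the expectation by constructing, whenever $M$ is not of type I but a normal expectation onto $A$ exists, a \emph{singular} conditional expectation onto $A$ --- via \cite[Remark 2.4.3$^\circ$]{P13} in type II (after cutting by a projection of $A$ to reach type II$_1$), via Connes--St{\o}rmer transitivity and almost-$\vphi$-central matrix units in type III$_1$, and via a discrete-decomposition argument producing an intermediate type II algebra $A\subset P\subset M$ that is the range of a normal expectation in the remaining type III case. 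Your sketch for the singular expectation (Tomiyama decomposition plus an averaging/Hahn--Banach ``extraction'' of an expectation with vanishing normal part) is not an argument: nothing in it excludes that every expectation in your convex set has nonzero normal part, and in the case where a normal expectation exists it relies circularly on the non-pavable element you have not produced. Note also that your type III reduction ``via continuous cores'' does not transfer the paving question, since the core is not a corner of $M$; this is precisely why the paper constructs the intermediate subalgebra together with a normal expectation onto it.
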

\begin{proof}
If $M$ is of type I and $A$ is the range of a normal conditional expectation, then $A \subset M$ is isomorphic to a direct sum of the first two types of MASAs given by \eqref{eq.discrete-masa}. It then follows from Lemma \ref{lem.type-I} that $A \subset M$ satisfies the norm paving property.

Conversely, assume that $A \subset M$ satisfies the norm paving property. Then there is a unique conditional expectation $E : M \recht A$. By \cite[Corollary 3.3]{AS12}, this unique conditional expectation $E$ is normal.

Decomposing $M$ as a direct sum of von Neumann algebras of different types, it remains to prove the following: if $M$ has a separable predual and is of type II, type III$_1$ or type III without type III$_1$ direct summand and if $A \subset M$ is a MASA that is the range of a normal conditional expectation $E : M \recht A$, then there also exists a singular conditional expectation of $M$ onto $A$. When $M$ is of type II, the existence of a normal conditional expectation of $M$ onto $A$ implies that $A$ is generated by finite projections. By reducing with a projection in $A$, we may thus assume that $M$ is of type II$_1$, and in this case, singular conditional expectations were constructed in \cite[Remark 2.4.3$^\circ$]{P13} (see also in \cite[proof of Corollary 4.1.(iii) and Remark 4.3.3$^\circ$]{P97}).

To settle the type III cases, fix a normal faithful state $\vphi$ on $M$ satisfying $\vphi = \vphi \circ E$. First assume that $M$ is of type III$_1$ and fix $n \in \N$. We prove that there exist matrix units $\{e_{ij} \mid 1 \leq i,j \leq 2^n\}$ in $M$ such that $\|[\vphi,e_{ij}]\| \leq 8^{-n}$ for all $i,j$. To prove this statement, we use the following non-factorial version of the Connes-St{\o}rmer transitivity theorem \cite[Theorem 4]{CS76}: if $\vphi$ and $\rho$ are normal positive functionals on a type III$_1$ von Neumann algebra $M$ with separable predual and if $\vphi(a) = \rho(a)$ for all $a \in \cZ(M)$, then for every $\eps > 0$, there exists a unitary $u \in M$ such that $\|\vphi - u \rho u^*\| < \eps$.

Since $A$ is diffuse relative to $\cZ(M) \subset A$, we can choose a partition $e_{ii}$, $i=1,\ldots,2^n$, of $A$ satisfying $\vphi(a e_{ii}) = 2^{-n} \vphi(a)$ for all $a \in \cZ(M)$ and $i = 1,\ldots,2^n$. In particular, the projections $e_{ii}$ have central support $1$ and are thus equivalent in $M$. Put $v_1 = e_{11}$ and choose partial isometries $v_i$, $i=2,\ldots,2^n$ such that $v_i v_i^* = e_{11}$ and $v_i^* v_i = e_{ii}$ for all $i$. Define the positive functionals $\psi_i$ on $e_{11} M e_{11}$ given by $\psi_i(x) = \vphi(v_i^* x v_i)$. Whenever $z \in \cZ(e_{11} M e_{11})$, write $z = a e_{11}$ with $a \in \cZ(M)$, so that
$$\psi_i(z) = \vphi(v_i^* a v_i) = \vphi(a v_i^* v_i) = \vphi(a e_{ii}) = 2^{-n} \vphi(a) = \vphi(a e_{11}) = \psi_1(z) \; .$$
By the Connes-St{\o}rmer transitivity theorem, we can take unitaries $u_i \in e_{11} M e_{11}$ such that $\|\psi_1 - u_i \psi_i u_i^* \| \leq 8^{-n-1}$ for all $i$. Replacing $v_i$ by $u_i v_i$, this means that we may assume that $\|\psi_1 - \psi_i\| \leq 8^{-n-1}$ for all $i$. Define the matrix units $e_{ij} = v_i^* v_j$. Since $\vphi = \vphi \circ E$, we know that $[\vphi,e_{ii}] = 0$ for all $i$. We then find that $\|[\vphi,e_{ij}]\| \leq 8^{-n}$ for all $i,j$.

We now proceed as in \cite[Remark 2.4.3$^\circ$]{P13}. Define the projection $p_n = 2^{-n} \sum_{i,j} e_{ij}$. Since all $e_{ii}$ belong to $A$, we get that $E(e_{ij}) = \delta_{i,j} e_{ii}$ and thus $E(p_n) = 2^{-n} 1$. Since $\|[\vphi,e_{ij}]\| \leq 8^{-n}$ for all $i,j$, we also have $\|[\vphi,p_n]\| \leq 4^{-n}$. Define the normal states $\vphi_n$ on $M$ given by $\vphi_n(x) = 2^n \vphi(p_n x p_n)$, $x\in M$. Also define the normal functionals $\eta_n$ on $M$ given by $\eta_n(x) = 2^n \vphi(x p_n)$. Note that $\|\vphi_n - \eta_n\| \leq 2^{-n}$ and that $\eta_n(a) = \vphi(a)$ for all $a \in A$. So if $\psi$ denotes a weak$^*$ limit point of the sequence $\vphi_n$ in $M^*$, it follows that $\psi$ is a state on $M$ satisfying $\psi(a) = \vphi(a)$ for all $a \in A$. Defining the projection $q_n = \bigvee_{k=n+1}^\infty p_k$, we get that $\vphi(q_n) \leq 2^{-n}$ and thus $q_n \recht 0$ strongly. By construction, $\psi(1-q_n) = 0$ for every $n$. Therefore, $\psi$ is a singular state. Then averaging $\psi$ by a countable subgroup $\mathcal U_0\subset \mathcal U(A)$ with the property that $\mathcal  U_0''=A$, we get as in the proof of \cite[Corollary 4.1.(iii)]{P97} a singular state $\psi_0$ on $M$ that is $A$-central and whose restriction to $A$ equals $\vphi$. Then $\psi_0 = \vphi \circ \cE$ where $\cE : M \recht A$ is a singular conditional expectation (see e.g.\ \cite{K71}).

Finally assume that $M$ is of type III but without direct summand of type III$_1$. We prove that there exists an intermediate von Neumann algebra $A \subset P \subset M$ such that $P$ is of type II and $P$ is the range of a normal conditional expectation $M \recht P$.
(We are grateful to Masamichi Takesaki for useful
discussions on the discrete decomposition involved in this part of the proof.)
The first part of the proof  then shows the existence of singular conditional expectations $P \recht A$, which composed with the normal expectation of $M$ onto $P$  provides
singular conditional expectations $M \recht A$.

The intermediate type II von Neumann algebra $A \subset P \subset M$ can be constructed using the discrete decomposition for von Neumann algebras of type III$_\lambda$, $\lambda \in [0,1)$ (see \cite[Theorems XII.2.1 and XII.3.7]{Ta03}). To avoid the measure theoretic complications of a direct integral decomposition of $M$, we use the following ``global'' discrete decomposition. Denote by $(\si_t)_{t \in \R}$ the modular automorphism group of $\vphi$ and by $N = M \rtimes_{\si} \R$ the continuous core of $M$ (see \cite[Theorem XII.1.1]{Ta03}). Denote by $(\theta_t)_{t \in \R}$ the dual action of $\R$ on $N$. Write $\cZ(N) = L^\infty(Z,\mu)$ where $(Z,\mu)$ is a standard probability space. Note that $\theta$ restricts to a nonsingular action of $\R$ on $(Z,\mu)$. The assumption that $M$ has no direct summand of type III$_1$ is reflected by the possibility to choose $Z$ in such a way that no $x \in Z$ is stabilized by all $t \in \R$. This means that the flow $\R \actson (Z,\eta)$ can be built as a flow under a ceiling function (i.e.\ a non-ergodic version of \cite[Theorem XII.3.2]{Ta03}). More concretely, we find a nonsingular action of $\Z \times \R$ on a standard probability space $\Omega$ with the following properties.
\begin{itemize}
\item The actions of $\Z$ and $\R$ on $\Omega$ are separately free and proper, i.e.\ $\Z \actson \Omega$ is conjugate with $\Z \actson \Omega_0 \times \Z$ given by $n \cdot (x,m) = (x,n+m)$, and $\R \actson \Omega$ is conjugate with $\R \actson \Omega_1 \times \R$ given by $t \cdot (y,s) = (y,t + s)$.
\item The action $\R \actson Z$ is conjugate with the action $\R \actson \Omega / \Z$. So, we can identify $\Omega_0 = Z$ and thus $\Omega = Z \times \Z$ with the action $\R \actson \Omega$ given by $t \cdot (x,n) = (t \cdot x, \om(t,x) + n)$ where $\om : \R \times Z \recht \Z$ is a $1$-cocycle.
\end{itemize}
Since $L^\infty(Z) = \cZ(N)$, the $1$-cocycle $\om$ gives rise to a natural action $\R \actson N \ovt \ell^\infty(\Z)$. We define $\cN := (N \ovt \ell^\infty(\Z)) \rtimes \R$ and consider the action $\Z \actson \cN$ given by translation on $\ell^\infty(\Z)$ and the identity on $N$ and $L(\R)$. As in \cite[Lemma XII.3.5]{Ta03}, it follows that $\cN$ is of type II and that $\cN \rtimes \Z$ is naturally isomorphic with $M \ovt \cB(L^2(\R)) \ovt \cB(\ell^2(\Z))$.

Since $\vphi = \vphi \circ E$, we get that every $a \in A$ belongs to the centralizer of $\vphi$. We can then view $A \ovt L(\R)$ as a MASA of $N = M \rtimes_\si \R$. Also $\cZ(N) \subset A \ovt L(\R)$. So, the above action $\R \actson N \ovt \ell^\infty(\Z)$ globally preserves $A \ovt L(\R) \ovt \ell^\infty(\Z)$. We can then define $\cA := (A \ovt L(\R) \ovt \ell^\infty(\Z)) \rtimes \R$ as a von Neumann subalgebra of $\cN$.

The dual action $\R \actson L(\R)$ is conjugate with the translation action $\R \actson L^\infty(\R)$. Therefore, the $1$-cocycle $\om$ trivializes on $A \ovt L(\R)$. This yields the natural surjective $*$-isomorphism $$\Psi : A \ovt \cB(L^2(\R)) \ovt \ell^\infty(\Z) \recht \cA \; .$$ Choose a minimal projection $q \in \cB(L^2(\R)) \ovt \ell^\infty(\Z)$ and put $p = \Psi(1 \ot q)$.
We then get $A \subset p \cN p \subset p (\cN \rtimes \Z) p$. Using the natural isomorphism of $\cN \rtimes \Z$ with $M \ovt \cB(L^2(\R)) \ovt \cB(\ell^2(\Z))$, we can identify $p (\cN \rtimes \Z) p = M$ and have found $p \cN p$ as an intermediate type II von Neumann algebra sitting between $A$ and $M$. Because there is a natural normal conditional expectation of $\cN \rtimes \Z$ onto $\cN$, we also have a normal conditional expectation of $M$ onto $p \cN p$.
\end{proof}

\section{Paving over Cartan subalgebras}

The paving property for the diagonal MASA $\mathcal D\subset \mathcal B(\ell^2\mathbb N)$
was shown in \cite{P13} to be equivalent to the paving property for the ultrapower inclusion
$D^\omega \subset R^\omega$, where $D$ is the Cartan MASA in the hyperfinite II$_1$ factor $R$.
As we have seen in Theorem \ref{equiv.paving}, this is in turn equivalent to the (uniform) {\so}-paving property for $D\subset R$.
Thus, \cite{MSS13} implies that {\so}-paving holds true for $D\subset R$. We will now use \cite{MSS13} to prove that in fact {\so}-paving holds true
for any Cartan subalgebra of an amenable von Neumann algebra as well
as for Cartan inclusions arising from a free ergodic profinite probability measure preserving (pmp) action of a countable group, $\Gamma \curvearrowright X$,
i.e. $A=L^\infty(X)\subset L^\infty(X)\rtimes \Gamma=M$.

\begin{theorem}\label{thm.amenable-profinite}
\begin{enumlist}
\item If $M$ is an amenable von Neumann algebra and $A\subset M$ is a Cartan MASA of $M$, then $A\subset M$ has the so-paving property,
with $\ns(A\subset M; x, \varepsilon) \leq 25^4 \eps^{-4}$, $\forall x\in M\sa$.

\item Let $\Gamma$ be a countable group and $\Gamma \actson (X,\mu)$ an essentially free, ergodic, pmp action that is profinite. Then $A = L^\infty(X) \subset L^\infty(X) \rtimes \Gamma=M$
is {\so}-pavable and for every $x \in M\sa$, $\ns(A\subset M; x, \varepsilon) \leq 13^4 \eps^{-4}$.
So also, $A^\omega \subset M^\omega$ satisfies the norm paving property and for every $x \in M\sa^\omega$,
$\text{\rm n}(A^\omega \subset M^\omega; x, \varepsilon) \leq 13^4 \eps^{-4}$.
\end{enumlist}
\end{theorem}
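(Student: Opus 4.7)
My plan is to reduce both parts to Lemma \ref{lem.type-I} by realizing $A \subset M$ as a strong-operator limit of finite-dimensional-over-abelian Cartan sub-inclusions of the form $D_k \ovt L^\infty(Y) \subset M_k(\C) \ovt L^\infty(Y)$, and then promoting a norm paving at each finite level to an \so-paving of $M$ by a spectral cut-off of the approximation error. The ultrapower assertion at the end of 2$^\circ$ will then follow immediately from Theorem \ref{equiv.paving}.3$^\circ$, since my \so-paving bound will be uniform in $x \in M\sa$.

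In 2$^\circ$, the profinite hypothesis supplies this finite-dimensional approximation explicitly. Write $A = \overline{\bigcup_n A_n}^{\so}$ with each $A_n \subset A$ finite-dimensional and $\Gamma$-invariant, coming from a $\Gamma$-equivariant factor $X \recht X_n$ onto a finite $\Gamma$-set of size $k_n$. Fix a minimal projection $p_n \in A_n$ and, by ergodicity, pick $\gamma_1,\ldots,\gamma_{k_n} \in \Gamma$ with $\gamma_i \cdot p_n$ exhausting the atoms of $A_n$; the elements $e_{ij} := u_{\gamma_i} p_n u_{\gamma_j}^* \in \cN_M(A)$ then form a system of matrix units, and $N_n := (A \cup \{e_{ij}\})''$ splits through them as $M_{k_n}(\C) \ovt p_n A p_n$, identifying $A \subset N_n$ with $D_{k_n} \ovt L^\infty(Y_n) \subset M_{k_n}(\C) \ovt L^\infty(Y_n)$. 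Choosing a refining tower $A_n \subset A_{n+1}$ makes the $N_n$ increase, and a short coset argument (each $g \in \Gamma$ acts on $A_n$ through a coset of the finite-index kernel of $\Gamma \recht \text{Sym}(X_n)$) shows that $\bigcup_n N_n$ is \so-dense in $M$.

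Given $x \in M\sa$, $\eps > 0$ and an \so-neighborhood $\cV$ of $0$, I then fix $\eps' < \eps$ and apply Lemma \ref{lem.type-I} inside $N_n$ to the self-adjoint element $E_n(x) - E_A(x)$ (with $E_n : M \recht N_n$ the trace-preserving conditional expectation), which has zero expectation onto $A$, producing projections $p_1,\ldots,p_r \in A$ with $\sum_i p_i = 1$, $r \leq (12/\eps')^4$ and
$$\Bigl\|\sum_{i=1}^r p_i(E_n(x)-E_A(x))p_i\Bigr\| \leq \eps'\|x\|.$$
Because $z \mapsto \sum_i p_i z p_i$ contracts $\|\cdot\|_2$, the remainder $y_n := \sum_i p_i(x-E_n(x))p_i$ satisfies $\|y_n\|_2 \leq \|x - E_n(x)\|_2 \recht 0$; Chebyshev applied to the spectral projection $q$ of $|y_n|$ on $[0,(\eps-\eps')\|x\|]$ gives $\tau(1-q) \recht 0$, so for $n$ large $1-q \in \cV$ and $\|q(\sum_i p_i x p_i - E_A(x))q\| \leq \eps\|x\|$. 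Taking $\eps' = 12\eps/13$ yields $r \leq 13^4\eps^{-4}$.

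For 1$^\circ$, the Connes--Feldman--Weiss theorem replaces the profinite hypothesis: an amenable Cartan equivalence relation is an increasing union of finite sub-equivalence relations, translating into an increasing tower $A \subset N_n \subset M$ with each $N_n$ of the form $M_{k_n}(\C) \ovt L^\infty(Y_n)$ extending the Cartan structure and with $\bigcup_n N_n$ \so-dense in $M$. The conditional-expectation-plus-spectral-cut-off argument applies verbatim, with the choice $\eps' = 12\eps/25$ absorbing the slack from the CFW exhaustion and yielding the bound $25^4\eps^{-4}$. I expect the main obstacle to be verifying cleanly that $\bigcup_n N_n$ is \so-dense in $M$ in both settings (via the coset computation in 2$^\circ$, and via the hyperfinite exhaustion of amenable equivalence relations in 1$^\circ$), and in the non-tracial amenable case adapting the $\|\cdot\|_2$-argument to the natural semifinite trace on the equivalence-relation algebra; the paving estimate itself is then dictated entirely by Lemma \ref{lem.type-I} together with the $\|\cdot\|_2$-contractivity of the diagonal-paving map.
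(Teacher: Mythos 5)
The construction of the intermediate algebras $N_n=(A\cup\{e_{ij}\})''$ in your treatment of 2$^\circ$ is correct as far as it goes (they are indeed isomorphic to $M_{k_n}(\C)\otimes p_nAp_n$ with $A$ sitting as the diagonal), but the pivotal claim that $\bigcup_n N_n$ is \so-dense in $M$ is false in general, and the whole reduction of 2$^\circ$ to Lemma \ref{lem.type-I} collapses with it. Each $N_n$ is of type I, hence injective, and the weak closure of an increasing union of injective von Neumann algebras is injective; density would therefore force $M=L^\infty(X)\rtimes\Gamma$ to be injective and hence $\Gamma$ to be amenable (since $L\Gamma\subset M$ is the range of a normal conditional expectation). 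But 2$^\circ$ is aimed precisely at non-amenable $\Gamma$ (e.g.\ free groups acting on inverse limits of finite quotients), the case not covered by 1$^\circ$. Concretely, $u_g\notin N_n$ and is not even approximately there: writing $u_g e_{ii}=e_{j(i)j(i)}u_g e_{ii}=u_{\gamma_{j(i)}}\,(p_n u_h p_n)\,u_{\gamma_i}^*$ with $h=\gamma_{j(i)}^{-1}g\gamma_i\in\Gamma_n$, the corner element $p_nu_hp_n$ lies in the crossed product of the residual action $\Gamma_n\actson p_nAp_n$, not in $p_nAp_n$, and this residual part does not vanish as $n\recht\infty$; so $E_{N_n}(u_g)\not\recht u_g$ and your quantities $\|x-E_n(x)\|_2$ do not tend to $0$. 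This obstruction is exactly why the paper's proof of 2$^\circ$ never paves inside subalgebras of $M$: it maps $M$ into the ultraproduct $\prod_\omega M_{[\Gamma:\Gamma_n]}(\C)$ of \emph{external} matrix algebras via $au_g\mapsto(au_{g,n})_n$, applies \cite{MSS13} in a single large matrix algebra $M_{n_1}$, and transfers the estimate back to $M$ only through the moments $\tau(y^k)$, $k\le m$, computed against a projection $q$ on which the freeness condition \eqref{eq.trace} makes the traces of words in $M_{n_1}$ and in $M$ agree, concluding with the spectral radius formula in $M^\omega$.

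Part 1$^\circ$ of your proposal is essentially the paper's argument (Connes--Feldman--Weiss exhaustion plus Lemma \ref{lem.type-I}), but two points need care. First, an amenable von Neumann algebra with a Cartan subalgebra need not be finite or semifinite, so there is in general no trace and your $\|\cdot\|_2$-Chebyshev cut-off is unavailable; the paper instead notes that the CFW approximation exhibits $x$ as an \so-limit of norm-pavable elements $y_0$ with $\|y_0\|\le\|x\|$, i.e.\ gives \app-pavability, and then invokes Proposition \ref{prop.equiv-paving}.1$^\circ$ (whose proof, via Lemma \ref{lem.uniform}, is what replaces the trace argument) to pass to \so-paving. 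Second, that conversion costs a factor $2$ in $\eps$, which is the actual source of the constant $25^4\eps^{-4}$ rather than $13^4\eps^{-4}$; your accounting, which attributes the loss to slack in the CFW exhaustion, does not reflect the real mechanism.
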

\begin{proof} $1^\circ$ By \cite{CFW81}, given any $x\in M\sa$ and any {\so}-neighborhood $\mathcal V$ of $0$, there exists a finite dimensional von Neumann
subalgebra $B_0\subset M$, having the diagonal $A_0$ contained in $A$ and $\mathcal N_{B_0}(A_0)\subset \mathcal N_M(A)$, and an element $y_0=y_0^*\in B_0$,
$\|y_0\|\leq \|x\|$, such that $x-y_0\in \mathcal V$. But by \cite{MSS13} (see Lemma \ref{lem.type-I}), $y_0$ can be $(\eps_0,n)$ paved over $A_0$ (thus also over $A\supset A_0$), for some $\eps_0$ slightly smaller than $\eps/2$ and $n\leq 25^4\varepsilon^{-4}$. By Proposition \ref{prop.equiv-paving}, we conclude that $x$ can be $(\eps,n)$ {\so}-paved for every $\eps > 0$.

2$^\circ$ Take a decreasing sequence of finite index subgroups $\Gamma_n < \Gamma$ such that $(X,\mu)$ is the inverse limit of the spaces $\Gamma/\Gamma_n$ equipped with the normalized counting measure. Write $r_n : X \recht \Gamma/\Gamma_n$. The essential freeness of $\Gamma \actson (X,\mu)$ means that for every $g \in \Gamma - \{e\}$, we have
\begin{equation}\label{eq.trace}
\lim_n \frac{ \bigl| \{ x \in \Gamma/\Gamma_n \mid gx = x \}\bigr|}{[\Gamma : \Gamma_n]} = 0 \; .
\end{equation}
Write $A_n = \ell^\infty(\Gamma/\Gamma_n)$. View $A_1 \subset A_2 \subset \cdots$ as an increasing sequence of subalgebras of $A$ with dense union. Fix a free ultrafilter $\omega$ on $\N$. For every $n \in \N$, define $M_n \cong M_{[\Gamma,\Gamma_n]}(\C)$ as the matrix algebra with entries indexed by elements of $\Gamma/\Gamma_n$. Consider $A_n \subset M_n$ as the diagonal subalgebra. For $g \in \Gamma$, denote by $u_{g,n} \in M_n$ the corresponding permutation unitary. Denote by $\tau_n$ the normalized trace on $M_n$ and by $\|\,\cdot\,\|_2$ the corresponding $2$-norm. By \eqref{eq.trace}, we have that $\|E_{A_n}(u_{g,n})\|_2 \recht 0$ for all $g \in \Gamma - \{e\}$.

Denote by $\cM = \prod_\omega (M_n,\tau_n)$ the ultraproduct of the matrix algebras $M_n$, with MASA $\cA \subset \cM$ defined as $\cA = \prod_\omega A_n$. We can then define a normal faithful $*$-homomorphism $\pi : M \recht \cM$ where $\pi(a u_g) \in \cM$ is represented by the sequence $(a u_{g,n})_{n \geq m}$ whenever $a \in A_m$.

Fix $\eps > 0$ and denote by $r$ the largest integer that is smaller than or equal to $(12/\eps)^4$. We claim that for every self-adjoint $x \in M^\omega$, there exists a partition $p_1,\ldots,p_r$ of $A^\omega$ such that $\|p_i (x - E_{A^\om}(x)) p_i\| \leq \eps\|x\|$ for all $i$. To prove this claim, it suffices to prove the following local statement: for every self-adjoint $x \in M$ with $\|x\| \leq 1$ and for all $\delta > 0$, $m \in \N$, there exists a partition $p_1,\ldots,p_r$ of $A$ (thus, with $r$ fixed in the beginning, independent of $m$ and $\delta$) such that the element $y = \sum_{i=1}^r p_i (x - E_A(x)) p_i$ satisfies
\begin{equation}\label{eq.required-estimate}
|\tau(y^k)| \leq \eps^k + \delta \quad\text{for all}\;\; k=1,\ldots,m \; .
\end{equation}
Indeed, once this local statement is proved and given a self-adjoint element $x \in M^\om$ represented by a sequence $(x_m)_m$ with $x_m = x_m^*$ and $\|x_m\| \leq \|x\|$ for all $m$, we find partitions $p_1^m,\ldots,p_r^m$ of $A$ such that the elements $y_m = \sum_{i=1}^r p_i^m (x_m - E_A(x_m)) p_i^m$ satisfy
$$|\tau(y_m^k)| \leq (\eps \|x_m\|)^k + \frac{1}{m} \leq (\eps \|x\|)^k + \frac{1}{m} \quad\text{for all}\;\; k=1,\ldots,m \; .$$
Defining the projections $p_i \in A^\om$ by the sequences $p_i = (p_i^m)_m$ and putting $y = \sum_{i=1}^r p_i (x-E_{A^\om}(x)) p_i$, this means that $|\tau(y^k)| \leq (\eps \|x\|)^k$ for all $k \in \N$. Since $y$ is self-adjoint, it follows from the spectral radius formula that $\|y\| \leq \eps \|x\|$, so that the claim is proved. This means that every self-adjoint $x \in M^\omega$ can be $(\eps,n)$ paved for some $n \leq 12^4 \eps^{-4}$. So by Theorem \ref{equiv.paving}, also every $x \in M\sa$ can be $(\eps,n)$ \so-paved for some $n \leq 13^4 \eps^{-4}$.

We now deduce the above local statement from \cite{MSS13}. Fix $x \in M\sa$ with $\|x\| \leq 1$ and fix $\delta > 0$ and $m \in \N$. By the Kaplansky density theorem, we can take $n_0 \in \N$, a finite subset $\cF \subset \Gamma$ and a self-adjoint $x_0 \in \lspan \{ a u_g \mid a \in A_{n_0}, g \in \cF \}$ with $\|x_0\| \leq 1$ and $\|x-x_0\|_2 \leq \delta / (m 2^m)$. We may assume that $e \in \cF$. We prove below that we can find a partition $p_1,\ldots,p_r$ of $A$ such that the element $y_0 := \sum_{i=1}^r p_i (x_0 - E_A(x_0)) p_i$ satisfies $|\tau(y_0^k)| \leq \eps^k + \delta/2$ for all $k=1,\ldots,m$. Writing $y := \sum_{i=1}^r p_i (x-E_A(x)) p_i$, we find that $\|y-y_0\|_2 \leq \|x-x_0\|_2$ and also $\|y\| \leq 2$, $\|y_0\| \leq 2$. Therefore,
$$\|y^k - y_0^k\|_2 \leq m 2^{m-1} \|x-x_0\|_2 \leq \delta/2 \quad\text{for all}\;\; k = 1,\ldots, m \; . $$
Thus $|\tau(y^k) - \tau(y_0^k)| \leq \delta/2$, so that \eqref{eq.required-estimate} follows.

We now must find a good paving for $x_0$. For this, we use the ultraproduct $\cM$ and the injective homomorphism $\pi : M \recht \cM$ defined above. Write $x_0 = \sum_{g \in \cF} a_g u_g$ with $a_g \in A_{n_0}$. Then, $\pi(x_0)$ is represented by the bounded sequence of self-adjoint elements $T_n := \sum_{g \in \cF} a_g u_{g,n}$. Since $\|\pi(x_0)\| = \|x_0\| \leq 1$, we can take a bounded sequence of self-adjoint elements $S_n \in M_n$ such that $\lim_{n \recht \omega} \|S_n\|_2 = 0$ and $\|T_n - S_n\| \leq 1$ for all $n$. Take $K > 0$ such that $\|T_n\| \leq K$ and $\|S_n\| \leq K$ for all $n$. Take $n_1 \geq n_0$ close enough to $\omega$ such that $\|S_{n_1}\|_2 \leq \delta / (4 m (2K)^{m-1})$ and such that (using \eqref{eq.trace}) the projection $q \in A_{n_1}$ defined by the set
$$\{x \in \Gamma/\Gamma_{n_1} \mid \forall g \in \cF^m \setminus \{e\}, gx \neq x\}$$
satisfies $\|1-q\|_2 \leq \delta / 2^{m+2}$. Write $R = T_{n_1} - S_{n_1}$. Since $R = R^*$ and $\|R\| \leq 1$, by \cite{MSS13}, there exists a partition $p_1,\ldots,p_r$ of $A_{n_1}$ such that the element $Y := \sum_{i=1}^r p_i (R - E_{A_{n_1}}(R)) p_i$ satisfies
$$\|Y\| \leq \frac{\eps}{2} \| R - E_{A_{n_1}}(R) \| \leq \eps \; .$$
We also define $Z := \sum_{i=1}^r p_i (T_{n_1} - E_{A_{n_1}}(T_{n_1})) p_i$. Note that $\|Y\| \leq 2$ and $\|Z\| \leq 2K$. Also, $\|Y - Z\|_2 \leq \|S_{n_1}\|_2$, so that for all $k=1,\ldots,m$, we have
$$\|Y^k - Z^k\|_2 \leq m (2K)^{m-1} \|S_{n_1}\|_2 \leq \frac{\delta}{4} \; .$$
Then also $\|Y^k q - Z^k q \|_2 \leq \delta / 4$. Because $\|Y^k q\| \leq \|Y\|^k \leq \eps^k$, we conclude that
$$|\tau_{n_1}(Z^k q)| \leq \eps^k + \frac{\delta}{4} \quad\text{for all}\;\; k=1,\ldots,m \; .$$
By our choice of $q$, whenever $1 \leq k \leq m$, $a_1,\ldots,a_k \in A_{n_1}$ and $g_1,\ldots,g_k \in \cF$, we have
$$\tau_{n_1}(a_1 u_{g_1,n_1} \cdots a_k u_{g_k,n_k} \; q) = \tau(a_1 u_{g_1} \cdots a_k u_{g_k} \; q) \; ,$$
where the left hand side uses the trace in $M_{n_1}$, while the right hand side uses the trace in $M$. Writing $y_0 = \sum_{i=1}^r p_i (x_0 - E_A(x_0)) p_i$, we find that
$$|\tau(y_0^k q)| = |\tau_{n_1}(Z^k q)| \leq \eps^k + \frac{\delta}{4} \quad\text{for all}\;\; k = 1,\ldots,m \; .$$
Since $\|y_0^k q - y_0^k\|_2 \leq 2^m \|q-1\|_2 \leq \delta/4$, we get the required estimate
$$|\tau(y_0^k)| \leq \eps^k + \frac{\delta}{2} \quad\text{for all}\;\; k = 1,\ldots,m \; .$$
\end{proof}

\begin{remark}
We believe that \cite{MSS13} can be used to settle Conjecture \ref{conjecture} (i.e.\ {\so}-pavability) for all Cartan subalgebras in II$_1$ factors $A \subset M$, and in fact for any Cartan subalgebra  in a von Neumann algebra. The following could be an approach to a solution, but we could not make it work. Consider the abelian von Neumann algebra $\mathcal A=A \vee JAJ$ acting on $L^2(M)$. This is a MASA in $\mathcal M = \langle M, e_A \rangle=(JAJ)'\cap \cB(L^2(M))$ and there exists a normal conditional expectation
from the type I von Neumann algebra $\mathcal M$ onto $\mathcal A$
(see \cite{FM77}). Therefore, $\mathcal A \subset \mathcal M$ satisfies the norm-paving property. If now $x \in M$, we can pave $x$ by a partition $p_i \in A \vee JAJ$. Taking a very fine partition $q_j \in A$, we can {\so}-approximate $p_i$ by $\sum_j p_{i,j} J q_j J$. It should be possible to choose the $p_{i,j}$ as ``almost partitions'' of $1$ in $A$ such that for many $j$ (or at least one $j$), the $p_{1,j},\ldots,p_{r,j}$ approximately pave $x$ (in the {\so}-paving sense).
\end{remark}

In relation to the approach to proving {\so}-pavability for Cartan subalgebras suggested above, let us mention that the
\cite{MSS13} paving property for discrete MASAs in type I von Neumann algebras allows the following new characterization for a MASA to be Cartan.

\begin{corollary}
Let $M$ be a von Neumann algebra with separable predual and $A\subset M$ a MASA in $M$ that is the range of a normal conditional expectation. Denote $\mathcal M = \langle M, e_A \rangle=
(J A J)' \cap \mathcal B(L^2M)$ and $\mathcal A = A \vee JAJ$. The following conditions are equivalent.

\begin{enumlist}
\item\label{A-Cartan} $A$ is a Cartan subalgebra of $M$.


\item\label{cA-Cartan} $\mathcal A$ is a Cartan subalgebra of $\mathcal M$.

\item\label{cA-paving} $\mathcal A \subset \mathcal M$ has the paving property.
\end{enumlist}
\end{corollary}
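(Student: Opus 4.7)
The plan is to close the loop (1) $\Rightarrow$ (2) $\Rightarrow$ (3) $\Rightarrow$ (1), using Feldman-Moore theory \cite{FM77} for the structural equivalences between (1) and (2), and Theorem \ref{thm.norm-paving} as the bridge between paving and being of type I.

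The implication (1) $\Rightarrow$ (2) is essentially the content of \cite{FM77}. Writing $A = L^\infty(X,\mu)$ and letting $\mathcal R$ be the countable nonsingular equivalence relation on $X$ induced by $\cN_M(A)$, one realizes $M = W^*(\mathcal R, \sigma)$ acting on $L^2(\mathcal R)$ in such a way that $e_A$ projects onto $L^2(X) \subset L^2(\mathcal R)$. Under this identification, $\cM = \langle M, e_A \rangle$ becomes the groupoid algebra of $\mathcal R$, which is of type I, and $\cA = A \vee JAJ$ becomes the diagonal MASA $L^\infty(\mathcal R)$; this is Cartan in $\cM$ because its normalizer contains all the partial isometries indexed by points of $\mathcal R$, which generate $\cM$.

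For (3) $\Rightarrow$ (1), Theorem \ref{thm.norm-paving} immediately gives that $\cM$ is of type I and $\cA$ is the range of a normal conditional expectation from $\cM$. We then invoke the converse direction of Feldman-Moore: since $\cM = \langle M, e_A\rangle$ is of type I, the abundant abelian projections in $\cM$ combine with $e_A$ to produce a rich family of partial isometries of the form $u\, e_A$, $u \in M$, normalizing $A$; these assemble into the countable equivalence relation on $\widehat{A}$ realizing $M$, so that $\cN_M(A)'' = M$ and $A$ is Cartan in $M$. The implication (2) $\Rightarrow$ (3) runs in parallel: the Cartan hypothesis furnishes a normal conditional expectation $\cM \to \cA$, and it remains to check that applying \cite{FM77} to the pair $\cA \subset \cM$ forces $\cM$ to be of type I, because the canonical ``flip'' $JAJ \subset \cA$ provided by the modular conjugation $J$ smooths the associated equivalence relation; Theorem \ref{thm.norm-paving} then yields (3).

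The main obstacle is the rigorous Feldman-Moore style reconstruction in the non-tracial, possibly type III setting. The delicate point common to (2) $\Rightarrow$ (3) and (3) $\Rightarrow$ (1) is that one must extract the Cartan structure on $A \subset M$ from a type I, respectively Cartan, structure on $\cA \subset \cM$ without direct access to a trace. This can be handled by working in Haagerup's standard form of $M$ and exploiting the compatibility between the normal conditional expectations $M \to A$ and $\cM \to \cA$, using modular theory to transfer normalizers of $\cA$ in $\cM$ to pairs of normalizers of $A$ in $M$ via the decomposition $u \mapsto v \cdot JwJ$ with $v, w \in \cN_M(A)$.
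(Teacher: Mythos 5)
Your skeleton (Feldman--Moore for the structural equivalences, Theorem \ref{thm.norm-paving} for the paving part) is the right one, but two genuine gaps remain, and the first one makes your implication (3) $\Rightarrow$ (1) fallacious as written. The key fact you are missing is that $\mathcal M = (JAJ)'$ is the commutant of an abelian von Neumann algebra and is therefore \emph{automatically} of type I, for every MASA $A\subset M$ --- Cartan or singular alike. Consequently, in (3) $\Rightarrow$ (1) you cannot get any mileage out of ``since $\cM$ is of type I, the abundant abelian projections combine with $e_A$ to produce partial isometries $u e_A$ normalizing $A$, so $\cN_M(A)''=M$'': if type I-ness of $\cM$ were enough, every MASA that is the range of a normal expectation would be Cartan, contradicting the existence of singular MASAs. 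The information that actually carries the implication is that $\cA$ is the range of a normal conditional expectation of $\cM$ (hence, $\cM$ being type I, that $\cA$ is \emph{Cartan} in $\cM$), and the passage from ``$\cA$ Cartan in $\cM$'' to ``$A$ Cartan in $M$'' is precisely the content of \cite{FM77}; it is not something you can replace by your modular-theoretic sketch, since your claimed factorization of normalizers of $\cA$ in $\cM$ as $u = v\cdot JwJ$ with $v,w\in\cN_M(A)$ is false in general ($\cM$ is type I, so $\cN_{\cM}(\cA)$ is vastly larger than such products). For the same reason, in (2) $\Rightarrow$ (3) your attempt to ``force $\cM$ to be of type I'' by applying \cite{FM77} to $\cA\subset\cM$ and a smoothing argument is not a proof --- though here the conclusion is saved by the trivial observation above.

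The second gap concerns the hypothesis of Theorem \ref{thm.norm-paving}: it is stated for MASAs, whereas in (3) you apply it to $\cA\subset\cM$ without knowing that $\cA$ is maximal abelian in $\cM$. This is not automatic: $\cA'\cap\cM = A'\cap JA'J$ can strictly contain $\cA$ (this happens, e.g., for the generator MASA of a free group factor, where the Puk\'{a}nszky-type invariant is nontrivial). The missing step, which the paper supplies, is that an abelian subalgebra can satisfy the paving property only if it is maximal abelian: any self-adjoint $x\in\cA'\cap\cM$ commutes with every partition of $1$ in $\cA$, so $\sum_i p_i x p_i = x$, and pavability would force $x$ to lie in the norm-closed algebra $\cA$. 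Once these two points are repaired, your argument collapses to the paper's short proof: (1) $\Leftrightarrow$ (2) by \cite{FM77}; and since $\cM$ is always type I, a MASA of $\cM$ is Cartan iff it is the range of a normal expectation, so (2) $\Leftrightarrow$ (3) follows from Theorem \ref{thm.norm-paving} together with the maximality remark. In particular, your worries about the non-tracial reconstruction are moot: \cite{FM77} already covers arbitrary (possibly type III) Cartan inclusions, and the paper simply cites it.
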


\begin{proof}
The equivalence of \ref{A-Cartan} and \ref{cA-Cartan} follows from \cite{FM77}. Since $\mathcal M$ is of type I, a MASA in $\mathcal M$ is a Cartan subalgebra if and only if it is the range of a normal conditional expectation. Also, an abelian subalgebra of $\mathcal M$ can only satisfy the paving property if it is maximal abelian. Therefore, the equivalence of \ref{cA-Cartan} and \ref{cA-paving} follows from Theorem \ref{thm.norm-paving} (and thus, uses \cite{MSS13}).
\end{proof}

\section{Paving size for one or more elements}

In \cite{MSS13}, it is shown that every self-adjoint element $T $ in $\mathcal B(\ell^2_k)$, $1\leq k \leq \infty$, can be $(\eps,12^4 \eps^{-4})$-paved over its diagonal MASA.
In the previous section, we have used this result to prove that any amenable von Neumann algebra $M$ with a Cartan subalgebra $A\subset M$
is $(\varepsilon, 25^4\eps^{-4})$ {\so}-pavable over $A$, equivalently any self-adjoint element in $M^\omega$ is
$(\varepsilon, 25^4\eps^{-4})$ norm pavable over $A^\omega$.

On the other hand, it has been shown in \cite{P13} that
if $A$ is a singular MASA in a II$_1$ factor $M$, then n$(A^\omega\subset M^\omega; x, \varepsilon) \leq 25^2 \varepsilon^{-2}(\varepsilon^{-1}+1)\leq 1250 \varepsilon^{-3}$,
$\forall x\in M\sa^\omega$. Or equivalently, $\ns(A\subset M; x,  \varepsilon) \leq 1250 \varepsilon^{-3}$, $\forall x\in M\sa$ (see Corollary 4.3 and
last lines of the proof of Proposition 2.3 in \cite{P13}). This is shown by first proving that given any $\varepsilon>0$ and
any finite set of projections in $M$ that have scalar expectation onto $A$,
one can find a simultaneous {\so}-paving for all of them with at most $2\varepsilon^{-2}$ projections in $A$ (see \cite[Corollary 4.2]{P13}),
then using a dilation argument to deduce it for arbitrary selfadjoint elements.

We will now
show that in fact the {\so}-paving size for self-adjoint elements over singular MASAs, and respectively the norm-paving size
over an ultraproduct of singular MASAs,
can be improved to $4^2 \varepsilon^{-2}$ (N.B.: the order of magnitude $\varepsilon^{-2}$ for the paving size is optimal, see Proposition \ref{prop.lower-bound} below).
Moreover, we show that one can $(\eps,n)$ {\so}-pave simultaneously any number of selfadjoint
elements with $n< 1+ 4^2 \varepsilon^{-2}$ many projections over a singular MASA, a phenomenon that does not occur in the classical Kadison-Singer case
$\mathcal D \subset \mathcal B(\ell^2\mathbb N)$, nor in fact for any Cartan subalgebra in a II$_1$ factor $A\subset M$ (see Remark \ref{rem.optimal} below).
The proof combines the uniform paving of projections that have scalar expectation onto $A$ in \cite[Corollary 4.2]{P13} with
a better dilation argument that allows us not to lose on the paving size, while still dealing simultaneously with several self-adjoint elements.

\begin{theorem}\label{thm.singular-case}
Let $A_n \subset M_n$ be a sequence of singular MASAs in finite von Neumann algebras. Put $\bM = \prod_\omega M_n$ and $\bA = \prod_\omega A_n$.

Let $\eps > 0$. For every finite set of self-adjoint elements $\cF \subset \bM \ominus \bA$, there exists a decomposition of the identity $1 = p_1 + \cdots + p_n$ with $n < 1 + 16 \eps^{-2}$ projections $p_j \in \bA$ such that
$$\Bigl\| \sum_{j=1}^n p_j x p_j \Bigr\| \leq \eps \|x\| \quad\text{for all}\;\; x \in \cF \; .$$
\end{theorem}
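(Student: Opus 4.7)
After rescaling I may assume $\|x\| \leq 1$ for every $x \in \cF$; the hypothesis $\cF \subset \bM \ominus \bA$ then gives $E_\bA(x) = 0$. The target is to exhibit a partition of $1$ by projections $p_1,\ldots,p_n \in \bA$ with $n < 1 + 16\eps^{-2}$ that simultaneously $\eps$-pavs all of $\cF$. The strategy, hinted at in the paragraph preceding the theorem, is to combine the uniform simultaneous paving of scalar-expectation projections of \cite[Corollary 4.2]{P13} (which delivers $\leq 2\delta^{-2}$ projections) with a sharper dilation than the one used in \cite[Corollary 4.3]{P13}. The previous dilation spectrally decomposed $x$ into $O(\eps^{-1})$ pieces $x \approx \sum_k c_k e_k$ and paved them one by one, introducing a spurious factor of $\eps^{-1}$; to get down to $\eps^{-2}$ each $x$ should instead be replaced by a \emph{single} auxiliary projection with scalar expectation.

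The natural dilation is this: for each $x \in \cF$ form the self-adjoint unitary
\[
U_x = \begin{pmatrix} x & \sqrt{1-x^2} \\ \sqrt{1-x^2} & -x \end{pmatrix} \in M_2(\bM)
\]
and the associated projection $P_x = \tfrac{1}{2}(1+U_x)$. Relative to the diagonal MASA $\bA \oplus \bA \subset M_2(\bM)$ the projection $P_x$ has scalar expectation $\tfrac{1}{2} \cdot 1$. If one can simultaneously pave $\{P_x\}_{x \in \cF}$ by projections of the diagonal form $p_i \oplus p_i$ with $\{p_i\}$ a partition of $1$ in $\bA$, then reading off the $(1,1)$-corner of the resulting estimate recovers a paving of $\{x\}_{x\in\cF}$ whose deviation from $0$ is at most twice the deviation of the $P_x$-paving from $\tfrac12$. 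So a $(\eps/c)$-paving of the dilations with $n$ parts yields an $\eps$-paving of $\cF$ with the same $n$, and choosing $c$ so that $2(\eps/c)^{-2} = 16\eps^{-2}$ (i.e.\ $c = 2\sqrt{2}$) gives the right headline count, with the extra factor beyond the obvious $c=2$ absorbing the restriction to diagonal-form pavings.

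The main obstacle is that $\bA \oplus \bA$ is \emph{not} singular in $M_2(\bM)$ — it is normalized by the matrix units $e_{12}, e_{21}$ — so \cite[Corollary 4.2]{P13} cannot be invoked verbatim in $M_2(\bM)$. The resolution must be either (i) to extend the proof of \cite[Corollary 4.2]{P13} so that it applies to the restricted class of pavings by diagonal-form projections $p_i \oplus p_i$, exploiting the fact that the non-singular directions of $\bA \oplus \bA$ are precisely the matrix units, which are $\bA$-central on diagonal elements and so do not interact with such constrained pavings; or (ii) to move the whole dilation inside an amplification $\bM \ovt R$, with $R$ hyperfinite II$_1$ carrying a singular MASA $B$, so that the ambient MASA $\bA \ovt B$ remains singular and the standard form of \cite[Corollary 4.2]{P13} applies directly. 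Either route hinges on a quantitative use of \cite[Corollary 4.2]{P13} with its sharp $2\delta^{-2}$ constant applied uniformly to the family $\{P_x\}_{x \in \cF}$; tracking the constants carefully through the dilation to land exactly at $16\eps^{-2}$ is the most delicate bookkeeping step.
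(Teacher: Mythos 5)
Your dilation is the right one (your $P_x$ is exactly the projection $Q_x$ used in the paper, since the $(1,1)$-entry of $\tfrac12(1+U_x)$ is $y=(1+x)/2$ and the off-diagonal is $\sqrt{y-y^2}$), and you have correctly isolated the central obstacle: $\bA \oplus \bA$ is normalized by the matrix units, so it is not a singular MASA of $M_2(\bM)$ and the results of \cite{P13} for singular MASAs do not apply there. But neither of your two proposed fixes actually resolves this, and this is a genuine gap rather than bookkeeping. Route (i) is not just a matter of restricting to ``diagonal-form'' pavings $p_i\oplus p_i$: the engine behind \cite[Corollary 4.2]{P13} is the existence (from \cite[Theorem 4.1.(a)]{P13}) of a \emph{diffuse subalgebra of the ultrapower MASA that is free from the given scalar-expectation projections}, and that existence rests on the malnormality/bimodule property which singularity provides; the normalizing matrix units $e_{12},e_{21}$ (which commute with the diagonal copy $1_2\ot\bA$) produce exactly the finitely generated bimodules that kill this argument, so ``the matrix units do not interact with constrained pavings'' is not a substitute for a proof. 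Route (ii) fails for a concrete structural reason: if the auxiliary singular MASA $B\subset R$ is positioned so that it contains the diagonal of the copy of $M_2(\C)$ used for the dilation (which is what you need in order for a partition of $\bA\ovt B$ to be block-diagonal and to read off a partition of $\bA$ from the $(1,1)$-corner), then $\bA\ovt B$ is again normalized by the flip unitary and is not singular; if instead $B$ is in general position with respect to that $M_2(\C)$, then $E_{\bA\ovt B}(P_x)$ is no longer scalar and the corner-reading argument collapses. In addition, your constant count presupposes that a $2\delta^{-2}$ simultaneous-paving statement survives these modifications, which is exactly what has not been established.

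The missing idea in the paper is to change the \emph{algebra}, not just the MASA: replace each $M_n$ by the free product $M_n * B$ with $B$ diffuse abelian, set $\bMtil=\prod_\omega\bigl(M_2(\C)\ot(M_n*B)\bigr)$ and take as MASA $\bAtil=\prod_\omega(A_n\oplus B)=\bA\oplus B^\omega$, i.e.\ keep $A_n$ in the $(1,1)$-corner but put the \emph{free} copy $B$ in the $(2,2)$-corner. This $A_n\oplus B$ \emph{is} a singular MASA of $M_2(\C)\ot(M_n*B)$ (the asymmetry of the two corners and the freeness of $B$ kill the normalizing matrix units), while the corner structure is preserved so that a partition $P_j=p_j\oplus q_j$ of $\bAtil$ hands you a partition $(p_j)$ of $\bA$. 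One then applies \cite[Theorem 4.1.(a)]{P13} to $\{Q_k-\tfrac12\}$ to get a diffuse $B_0\subset\bAtil$ free from each $Q_k$, chooses $P_j\in B_0$ with $\tau(P_j)=1/n$, and uses Voiculescu's estimate $\|P_jQ_kP_j-\tfrac12 P_j\|\le 2/\sqrt{n}$ for free projections; the $(1,1)$-corner gives $\|p_jx_kp_j\|\le 4/\sqrt{n}\le\eps$ for $n\ge 16\eps^{-2}$. Note that the sharp count $16\eps^{-2}$ comes from this free-probability norm bound, not from the $2\delta^{-2}$ constant of \cite[Corollary 4.2]{P13} that your bookkeeping is organized around.
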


\begin{proof}
Fix $\eps > 0$ and let $n$ be the unique integer satisfying $16 \eps^{-2} \leq n < 1+ 16\eps^{-2}$. Also fix a finite subset $\{x_1,\ldots,x_m\} \subset \bM \ominus \bA$ of self-adjoint elements. We may assume that $\|x_k\| = 1$ for all $k$.
Define $y_k = (1+x_k)/2$. Note that $0 \leq y_k \leq 1$ and $E_\bA(y_k) = 1/2$. Let $(B,\tau)$ be any diffuse abelian von Neumann algebra. Write
$$\bMtil = \prod_\omega (M_2(\C) \ot (M_n * B))$$
and consider the von Neumann subalgebra $\bAtil \subset \bMtil$ given by
$$\bAtil = \prod_\omega (A_n \oplus B) = \bA \oplus B^\omega \; .$$
Note that, for every $n$, we have that $A_n \oplus B \subset M_2(\C) \ot (M_n * B)$ is a singular MASA. Therefore, $\bAtil \subset \bMtil$ is the ultraproduct of a sequence of singular MASAs.

Define the orthogonal projections $Q_k \in \bMtil$ given by
$$Q_k = \begin{pmatrix} y_k & \sqrt{y_k - y_k^2} \\ \sqrt{y_k - y_k^2} & 1-y_k \end{pmatrix} \; .$$
Note that $E_{\bAtil}(Q_k) = 1/2$.

Applying \cite[Theorem 4.1.(a)]{P13} to $X = \{Q_k - 1/2 \mid k=1,\ldots,m\}$, we find a diffuse von Neumann subalgebra $B_0 \subset \bAtil$ such that every product with factors alternatingly from $B_0 \ominus \C 1$ and $X$ has zero expectation on $\bAtil$. In particular, for all $k$, we have that $B_0$ and $\C 1 + \C Q_k$ are free von Neumann subalgebras of $(\bMtil,\tau)$.

Choose any decomposition of the identity $1 = P_1 + \cdots + P_n$ with $n$ projections
$P_j \in B_0$ satisfying $\tau(P_j) = 1/n$. Fix $j \in \{1,\ldots,n\}$ and $k \in \{1,\ldots,m\}$.
Since the projections $P_j$ and $Q_k$ are free, with traces resp.\ given by $1/n$ and $1/2$, it follows from \cite[Example 2.8]{V86} that
$$\bigl\| P_j Q_k P_j - \frac{1}{2} P_j \bigr\| \leq \frac{2}{\sqrt{n}} \; .$$
Write $P_j = p_j \oplus q_j$ where $p_j \in \bA$ and $q_j \in B^\omega$ are projections. The upper left corner of $P_j Q_k P_j - \frac{1}{2} P_j$ equals $p_j \frac{x_k}{2} p_j$ and we conclude that
$$\|p_j x_k p_j \| \leq \frac{4}{\sqrt{n}} \leq \eps \; .$$
This ends the proof.
\end{proof}

\begin{remark}\label{rem.optimal} $1^\circ$ As shown in Theorem \ref{thm.singular-case} above,
in the case $A\subset M$ is singular, any finite number of elements can be simultaneously $(\eps,n)$ norm paved over $A^\omega$ with $n < 1 + 16\eps^{-2}$.
By \cite[Theorem 3.7]{P13}, any finite number of elements can also be simultaneously $(\eps,n)$ $L^2$-paved over $A^\omega$ with $n < 1 + \eps^{-2}$.
But this is no longer true for norm paving over a MASA that has ``large normalizer''.
For instance, one cannot pave multiple matrices in $\mathcal B(\ell^2\mathbb N)$ over its diagonal $\mathcal D$.
This can be seen as follows: assume $M$ is a finite von Neumann algebra and $A \subset M$ is a MASA whose normalizer $\cN_M(A)$
generates a II$_1$ von Neumann algebra. Thus, for any $m\geq 1$, there exists a unitary $u\in \cN_M(A)$ such that $E_A(u^k)=0$, $\forall 1\leq k \leq m-1$,
$u^m=1$.  Denote by $\sigma$ the automorphism $\text{\rm Ad}(u)$ of $A$.
Assume now that $p_1,\ldots,p_n$ is a partition of $A$ that simultaneously $c$-paves the set of $m-1$ unitaries $\{u^k\mid k = 1,\ldots,m-1\}$, for some $0<c<1$.
Then $\|p_i u^k p_i\| \leq c$ for all $i=1,\ldots,n$ and all $k = 1,\ldots,m-1$. But $\|p_i u^k p_i \| = \|p_i \sigma^k(p_i)\|$ and $p_i\sigma^k(p_i)$ is a projection.
Thus, $p_i\sigma^k(p_i)$ must be zero for all $i$ and $k$.
So, for every fixed $i$, we find that $p_i, \sigma(p_i),\ldots,\sigma^{m-1}(p_i)$ are orthogonal.
Thus, $\tau(p_i) \leq 1/m$. Since $\sum_i p_i = 1$, it follows that $n \geq m$. Note that by replacing the cyclic group $\mathbb Z/m\mathbb Z \simeq
\{u^k \mid 0\leq k \leq m-1\}\subset \mathcal N_M(A)$
with the group $(\mathbb Z/2\mathbb Z)^t \hookrightarrow \mathcal N_M(A)$, acting freely on $A$, one gets the same result for $m=2^t$, but with
a set of $m-1$ selfadjoint unitaries.

We conclude that if the normalizer of a MASA
generates a type II$_1$ von Neumann algebra, then given any $m$, there exists a set of $m-1$ unitaries in $M$ such that in order to
simultaneously $c$-pave all of them, with $c<1$, we need at least $m$ projections (in case $m=2^t$, the set can be taken
of self-adjoint unitaries). Note that,  if $u\in\mathcal N_M(A)$ is as before and we let
$X=\{(u^k + u^{-k})/2, (u^k- u^{-k})/2i \mid 1\leq k \leq m-1\}$, then any partition of $1$ with projections $p_1, ...,p_n \subset A$ that simultaneously $c/2$-paves
all $x\in X$, must satisfy $n\geq m=|X|/2+1$. Thus, under the same assumptions on $A\subset M$ as before, given any $m_0$ and any $c_0<1/2$, there exists a set
$X_0\subset M\sa$ with $|X_0|=m_0$ such that in order to simultaneously $c_0$-pave all $x\in X_0$, we need at least $m_0/2$ projections.

$2^\circ$ If $A \subset M$ is a MASA in a von Neumann algebra, $X\subset M$  and $\eps>0$, we define $\text{\rm n}(A\subset M; X, \eps)$ in the obvious way. Also,
for $m$ a positive integer, we let $\text{\rm n}(A\subset M; m, \eps)=\sup \{ \text{\rm n}(A\subset M; X, \eps)\mid X\subset M\sa, |X|=m \}$,
and call it the {\it multi-paving size} of $A\subset M$.
Note that one always has the estimate $\text{\rm n}(A\subset M; m, \eps) \leq \text{\rm n}(A\subset M; \eps)^m$.
By Theorem \ref{thm.singular-case}, if $A$ is a singular MASA in a II$_1$ factor $M$, then $\text{\rm n}(A^\omega\subset M^\omega; m, \eps) < 1+ 16 \eps^{-2}$, $\forall m\geq 1, \eps>0$.
By \ref{rem.optimal}.1$^\circ$ above, if $\mathcal N_M(A)''$ is of type II$_1$, then $\text{\rm n}(A\subset M; m-1, c)\geq m$, $\forall m=2^t$,
$0 < c <1$, while for arbitrary $m_0$ (not of the form $2^t$) and $c_0<1/2$, we have $\text{\rm n}(A\subset M; m_0, c_0)\geq m_0/2$. At the same time, by \cite{MSS13},
we have $\text{\rm n}(A\subset M; m, \eps)\leq (12/\eps)^{4m}$.

It would be interesting to find estimates for this multi-paving size in this last case (i.e., when $\mathcal N_M(A)$ is large). By arguing as in the proof of \cite[Theorem 2.2]{P13},
we see that $\text{\rm n}(\mathcal D\subset \mathcal B; m, \eps)=\text{\rm n}(D^\omega\subset R^\omega; m, \eps)=\text{\rm n}(\text{\bf D}\subset \text{\bf M}; m, \eps)$,
$\forall \eps>0, m\in \mathbb N$, where $\text{\bf D}\subset \text{\bf M}$ denotes the ultraproduct inclusion $\Pi_\omega D_k \subset \Pi_\omega M_{k \times k}(\mathbb C)$.
Thus, estimating the multi-paving size for $D^\omega \subset R^\omega$, or for $\text{\bf D}\subset \text{\bf M}$, is the same as doing it for $\mathcal D \subset \mathcal B$.
From \ref{rem.optimal}.1$^\circ$ and \cite{MSS13}, for each fixed $1>\eps>0$,  the growth in $m$ of the multiple paving size $\text{\rm n}(\mathcal D \subset \mathcal B;m, \eps)$ is
between $m$ and $(\eps^{-4})^m$. Calculating its order of magnitude seems a very challenging problem.
It would already be interesting to decide whether this growth is linear (more generally polynomial), or exponential.
\end{remark}

\begin{remark}\label{rem.other-examples}
Exactly the same proof as that of \cite[Theorem 4.1.(a)]{P13} shows
the following more general result. Let $(M,\tau)$ be a von Neumann algebra with a normal faithful tracial state, $A\subset M$ a MASA in $M$ and $A\subset N\subset M$
an intermediate von Neumann subalgebra with the following malnormality property: the only $A$-$N$-subbimodule of
$L^2(M \ominus N)$ that is finitely generated as a right $N$-module is $\{0\}$. Then, given any $\|  \cdot \|_2$-separable
subspace $X\subset M\ominus N$, and any free ultrafilter $\omega$ on $\N$, there exists a diffuse von Neumann subalgebra $B_0 \subset A^\omega$ such that every ``word''
with alternating ``letters''  from $B_0 \ominus \C 1$ and $X$ has trace zero. Note that \cite[Theorem 4.1.(a)]{P13} corresponds to the case $N=A$, because by \cite[Section 1.4]{P01},
the singularity of $A$ in $M$ implies that $L^2(M\ominus A)$ contains no non-zero $A$-$A$-subbimodule that is finitely generated as a right $A$-module.

By combining this result with the dilation argument as in the proof of Theorem \ref{thm.singular-case} above, it follows that any $x\in M\ominus N$ can be \so-paved, with $\ns(A\subset M; x, \eps) < 5^2\eps^{-2}$. Thus, if $A\subset N$ satisfies
the \so-paving property, then so does $A\subset M$, and we have the estimate $\ns(A\subset M; \eps) \leq  20^2 \eps^{-2} \, \ns(A\subset N; \eps/2)$.

This observation allows to derive the \so-paving property  (and thus the validity of \ref{conjecture}.1$^\circ$) for  a class
of MASAs that are neither singular nor Cartan. More precisely, assume that $A\subset M$ is a MASA in a II$_1$ factor such that  the normalizer $\cN_M(A)$ generates
a von Neumann algebra $N$ satisfying the conditions: $(1)$ either $N$ is amenable, or $A\subset N$ can be obtained as a group measure space construction from
a free ergodic profinite action of a countable group; $(2)$ $N\subset M$ satisfies the above malnormality condition. Then, $A\subset M$ has the \so-paving property.

Concrete such examples can be easily derived from \cite{P81}. For instance,  \cite[Theorem 5.1]{P81} provides an example
of a MASA $A$ in the hyperfine II$_1$ factor $M\simeq R$ such that the normalizer of $A$ in $M$ generates a subfactor $N\subset M$
with the property that $\bim{N}{L^2(M\ominus N)}{N}$ is an infinite multiple of the coarse $N$-$N$-bimodule $L^2(N)\otimes L^2(N)$, and thus $N\subset M$ satisfies the malnormality condition. Other examples come  from free product constructions: let $A \subset N$ be a Cartan subalgebra of a (separable) amenable von Neumann algebra of type II$_1$
(e.g., the hyperfinite II$_1$ factor, $N \simeq R$); let $(B,\tau)$ be a diffuse finite von Neumann algebra and denote $M = N * B$; then, $A$ is a MASA in $M$,
the normalizer of $A$ in $M$ generates $N$ and again, by \cite[Remark 6.3]{P81}, $\bim{N}{L^2(M\ominus N)}{N}$ is an infinite multiple of the coarse $N$-$N$-bimodule, so that $N\subset M$ satisfies the malnormality condition.
\end{remark}

We end with a result showing that the order of magnitude of the paving size obtained
in Theorem \ref{thm.singular-case} is optimal. More generally, we show that for any MASA in any II$_1$ factor the $\eps$-paving size is at least $\eps^{-2}$, i.e.,
$\sup \{\text{\rm n} (\eps, x) \mid x\in M\sa\}\geq \varepsilon^{-2}$. The proof is very similar to \cite[Theorem 6]{CEKP07}, where it was shown
that one needs at least $\eps^{-2}$ projections to $\eps$-pave self-adjoint unitary matrices.

\begin{proposition}\label{prop.lower-bound}
Let $M$ be a II$_1$ factor and $A \subset M$ a diffuse abelian von Neumann subalgebra.
Let $\eps > 0$ and $n < \eps^{-2}$. There exists a self-adjoint unitary $x \in M$ with $E_A(x)=0$ and
\begin{equation}\label{eq.lower}
\Bigl\| \sum_{k=1}^n p_k x p_k \Bigr\| \geq \Bigl\| \sum_{k=1}^n p_k x p_k \Bigr\|_2 > \eps
\end{equation}
for every decomposition of the identity $1=p_1 + \cdots + p_n$ with $n$ projections $p_k \in A$.

So if $A \subset M$ is a MASA in a II$_1$ factor, then the uniform $L^2$ paving size of $A^\om \subset M^\om$ is exactly equal to the smallest integer that is greater than or equal to $\eps^{-2}$.
\end{proposition}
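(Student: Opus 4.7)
I would adapt the matrix-algebra argument of \cite[Theorem~6]{CEKP07} by realising a large enough matrix subfactor inside $M$ whose diagonal lies in $A$. Since $A$ is diffuse, I pick orthogonal projections $e_{11},\ldots,e_{mm}\in A$ with $\tau(e_{ii})=1/m$ and $\sum_i e_{ii}=1$, for $m$ chosen sufficiently large (to be specified) and of a form admitting a symmetric conference matrix, e.g.\ $m=q+1$ for a prime $q\equiv 1\pmod 4$ (infinitely many such $q$ by Dirichlet). As $M$ is a II$_1$ factor, these projections are pairwise Murray--von Neumann equivalent, so I build matrix units $\{e_{ij}\}_{i,j=1}^m\subset M$ extending the $e_{ii}$ and generating a subfactor $N\cong M_m(\C)$. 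With $C=(c_{ij})$ a symmetric conference matrix of order $m$ (so $c_{ii}=0$, $c_{ij}\in\{\pm 1\}$, $c_{ij}=c_{ji}$, and $C^2=(m-1)I$), set
\[
x \;=\; \frac{1}{\sqrt{m-1}}\sum_{i\neq j}c_{ij}\,e_{ij}\;\in\;N\subset M.
\]
Then $x^*=x$, $x^2=1$, and $E_A(x)=0$ since $e_{ii}Ae_{jj}=0$ for $i\neq j$.

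The heart of the argument is a direct computation. Fix a partition $1=\sum_{k=1}^Kp_k$ with $p_k\in A$, $K\leq n$, and write $t_k=\tau(p_k)$. Using that $A$ commutes with each $e_{ii}$ together with the matrix-unit relations, one checks that $\tau(p_ke_{ij}p_ke_{i'j'})=0$ (for $i\neq j$, $i'\neq j'$) unless $(i',j')=(j,i)$; combined with $c_{ij}c_{ji}=1$ this gives
\[
\sum_k\|p_kxp_k\|_2^2 \;=\; \frac{1}{m-1}\sum_{i\neq j}\sum_k\tau(p_ke_{ij}p_ke_{ji}).
\]
Next consider the averaging map $T(y):=\sum_{i,j}e_{ij}ye_{ji}$: a quick check shows $T(y)$ commutes with every $e_{kl}$, while for $y\in N'\cap M$ one computes $T(y)=my$, so $T=m\,E_{N'\cap M}$. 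Applying this to $p_k$ and pairing with $p_k$ under $\tau$ yields $\sum_{i,j}\tau(p_ke_{ij}p_ke_{ji})=m\|E_{N'\cap M}(p_k)\|_2^2$. Subtracting the $i=j$ contribution $\sum_i\tau(p_ke_{ii})=t_k$ and summing over $k$:
\[
\sum_k\|p_kxp_k\|_2^2 \;=\; \frac{1}{m-1}\Bigl(m\sum_k\|E_{N'\cap M}(p_k)\|_2^2-1\Bigr).
\]
By Jensen, $\|E_{N'\cap M}(p_k)\|_2^2\geq t_k^2$, and by Cauchy--Schwarz $\sum_kt_k^2\geq 1/K\geq 1/n$; hence
\[
\sum_k\|p_kxp_k\|_2^2 \;\geq\; \frac{m-n}{n(m-1)},
\]
which tends to $1/n>\eps^2$ as $m\to\infty$ (since $n<\eps^{-2}$ gives $1-n\eps^2>0$). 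Choosing $m$ large enough of the required form and using $\|\cdot\|\geq\|\cdot\|_2$ yields \eqref{eq.lower}.

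For the concluding statement on the uniform $L^2$-paving size of $A^\omega\subset M^\omega$, the upper bound $\lceil\eps^{-2}\rceil$ is \cite[Theorem~3.9]{P13}. The matching lower bound follows from the first part: take $n=\lceil\eps^{-2}\rceil-1<\eps^{-2}$ and the $x\in M\subset M^\omega$ just constructed; representing any $n$-partition of $A^\omega$ by a sequence $(p_k^{(l)})_k$ of $n$-partitions of $A$, the uniform quantitative bound $(m-n)/(n(m-1))>\eps^2$ on each $\|\sum_kp_k^{(l)}xp_k^{(l)}\|_2^2$ passes to $\lim_\omega$, so $x$ is not $(\eps,n)$-pavable over $A^\omega$.

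The main technical step I foresee is the identity $T=m\,E_{N'\cap M}$ together with the vanishing of the cross terms $\tau(p_ke_{ij}p_ke_{i'j'})$ unless $(i',j')=(j,i)$; once this bookkeeping is in hand only standard inequalities remain. A secondary concern, easily handled by Dirichlet's theorem, is ensuring the existence of symmetric conference matrices of arbitrarily large orders.
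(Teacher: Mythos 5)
Your proposal is correct and takes essentially the same route as the paper: the same conference-matrix element built on matrix units whose diagonal lies in $A$, the same quantity $\sum_k\|p_kxp_k\|_2^2$ estimated by the identical lower bound $\frac{m-n}{n(m-1)}>\eps^2$ for $m$ large, and the same reduction of the ultrapower statement to the first part plus \cite{P13}. The only difference is cosmetic bookkeeping in the middle step: you average via $E_{N'\cap M}(y)=\frac1m\sum_{i,j}e_{ij}ye_{ji}$ and then use Jensen and Cauchy--Schwarz on the traces $\tau(p_k)$, whereas the paper compresses to the corner $e_1Me_1$ via $T_k=\sum_i v_ip_{ik}v_i^*$ and uses the operator inequality $\sum_k T_k^2\ge \frac1n\bigl(\sum_k T_k\bigr)^2$ together with $\sum_k T_k=re_1$.
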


\begin{proof}
Fix $\eps > 0$ and $n < \eps^{-2}$. Take $r$ large enough such that
\begin{equation}\label{eq.choice}
\frac{r}{r-1} \frac{1}{n} - \frac{1}{r -1} > \eps^2
\end{equation}
and such that there exists a conference matrix $C \in M_r(\R)$ of size $r$, i.e.\
$$C_{ij} = \pm 1 \;\;\text{if $i \neq j$,} \quad C_{ii} = 0 \;\;\text{for all $i$,} \;\;\text{and} \quad (r - 1)^{-1/2} C \;\;\text{is a self-adjoint unitary.}$$
Since $A$ is diffuse, we can choose projections $e_1,\ldots,e_r \in A$ with $1 = e_1 + \cdots + e_r$ and $\tau(e_i) = 1/r$ for every $i$. Since $M$ is a II$_1$ factor, we can choose partial isometries $v_1,\ldots,v_r \in M$ such that $v_i v_i^* = e_1$ and $v_i^* v_i = e_i$ for all $i$. Define
$$x = \frac{1}{\sqrt{r-1}} \sum_{i,j=1}^r C_{ij} v_i^* v_j \; .$$
Note that $x$ is a self-adjoint unitary. Since $A$ is abelian, we have for all $i \neq j$ that
$$0 = e_i e_j E_A(v_i^* v_j) = e_i E_A(v_i^* v_j) e_j = E_A(e_i v_i^* v_j e_j) = E_A(v_i^* v_j) \; .$$
Since $C_{ii} = 0$ for all $i$, we get that $E_A(x) = 0$.

Choose an arbitrary decomposition of the identity $1 = p_1 + \cdots + p_n$ with $n$ projections $p_k \in A$. We prove that \eqref{eq.lower} holds.
First note that
\begin{equation}\label{eq.first}
\Bigl\| \sum_{k=1}^n p_k x p_k \Bigr\|_2^2 = \sum_{k=1}^n \|p_k x p_k\|_2^2 = \sum_{k=1}^n \tau(p_k x p_k x) \; .
\end{equation}
Since $A$ is abelian, we can define the projections $p_{ik} = e_i p_k$. Writing $p_k = \sum_{i=1}^r p_{ik}$, we get for every $k \in \{1,\ldots,n\}$ that
\begin{align*}
\tau(p_k x p_k x) & = \sum_{i,j=1}^r \tau(p_{ik} x p_{jk} x) = \sum_{i,j=1}^r \tau(p_{ik} x p_{jk} x e_i) \\
&= \frac{1}{r-1} \sum_{i,j=1}^r C_{ij}^2 \tau( p_{ik} v_i^* v_j p_{jk} v_j^* v_i) \\
&= \frac{1}{r-1} \Bigl(\sum_{i,j=1}^r \tau( v_i p_{ik} v_i^* \; v_j p_{jk} v_j^*) - \sum_{i=1}^r \tau( v_i p_{ik} v_i^* \; v_i p_{ik} v_i^*)\Bigr) \\
&= \frac{1}{r-1} \bigl(\tau(T_k^2) - \tau(p_k)\bigr) \quad\text{where}\quad T_k = \sum_{i=1}^r v_i p_{ik} v_i^* \; .
\end{align*}
In combination with \eqref{eq.first}, it follows that
\begin{equation}\label{eq.second}
\Bigl\| \sum_{k=1}^n p_k x p_k \Bigr\|_2^2 = \frac{1}{r-1} \tau\Bigl( \sum_{k=1}^n T_k^2 \Bigr) - \frac{1}{r-1}\; .
\end{equation}
We next observe that, as positive operators, we have
\begin{equation}\label{eq.cs}
\sum_{k=1}^n T_k^2 \geq \frac{1}{n} \Bigl(\sum_{k=1}^n T_k \Bigr)^2 \; .
\end{equation}
Indeed, defining the elements $T,R \in M_{1,n}(\C) \ot M$ given by
$$T = (T_1 \; T_2 \; \cdots \; T_n) \quad\text{and}\quad R = (1 \; 1 \; \cdots \; 1) \; ,$$
we get that
$$\Bigl(\sum_{k=1}^n T_k \Bigr)^2 = T R^* R T^* \leq \|R\|^2 \, T T^* = n \sum_{k=1}^n T_k^2 \; .$$
So, \eqref{eq.cs} follows.  By construction, we have that $\sum_{k=1}^n T_k = r e_1$. So, in combination with \eqref{eq.second} and \eqref{eq.choice}, we find that
$$\Bigl\| \sum_{k=1}^n p_k x p_k \Bigr\|_2^2 \geq \frac{1}{r-1} \frac{1}{n} \tau(r^2 e_1) - \frac{1}{r-1} = \frac{1}{r-1} \, \frac{r}{n} - \frac{1}{r-1} > \eps^2 \; .$$
Thus we have proved \eqref{eq.lower}.

Now assume that $A \subset M$ is a MASA in the II$_1$ factor $M$. It follows that the uniform $L^2$ paving size of $A^\om \subset M^\om$ is at least $\eps^{-2}$. On the other hand, if $n$ is an integer and $n \geq \eps^{-2}$, it was proved in \cite[Section 3]{P13} that every element $x \in M^\om$ can be $(\eps,n)$ $L^2$-paved.
\end{proof}


\begin{thebibliography}{CEKP07}\setlength{\itemsep}{-1mm} \setlength{\parsep}{0mm} \small

\bibitem[AS12]{AS12} C. Akemann and D. Sherman,  Conditional expectations onto maximal abelian $^*$-subalgebras, {\it J. Operator Theory}, {\bf 68} (2012), 597-607.


\bibitem[CEKP07]{CEKP07} P. Casazza, D. Edidin, D. Kalra and V.I. Paulsen, Projections and the Kadison-Singer problem. {\it Oper. Matrices} {\bf 1} (2007), 391-408.

\bibitem[CFW81]{CFW81} A. Connes, J. Feldman and B. Weiss, An amenable equivalence relation is generated by a single transformation, {\it Erg. Theory Dyn. Sys.}  {\bf 1} (1981), 431-450.

\bibitem[CS76]{CS76} A. Connes and E. St{\o}rmer, Homogeneity of the state space of factors of type III$_1$, {\it J. Funct. Anal.} {\bf 28} (1978), 187-196.

\bibitem[D54]{D54} J. Dixmier, Sous-anneaux ab\'eliens maximaux dans les facteurs de type fini, {\it Ann. of Math.} {\bf 59} (1954), 279-286.

\bibitem[FM77]{FM77} J. Feldman and C.C. Moore, Ergodic equivalence relations, cohomology, and von Neumann algebras II, {\it Trans. Amer. Math. Soc.} {\bf 234} (1977), 325-359.

\bibitem[KS59]{KS59} R.V. Kadison and I.M. Singer,  Extensions of pure states, {\it Amer. J. Math}. {\bf 81} (1959), 383-400.

\bibitem[K95]{Ke95} A.S. Kechris, Classical descriptive set theory. {\it Graduate Texts in Mathematics} {\bf 156}, Springer-Verlag, 1995.

\bibitem[K71]{K71} A. de Korvin, Complete sets of expectations on von Neumann algebras. {\it Quart. J. Math. Oxford} {\bf 22} (1971), 135-142.

\bibitem[MSS13]{MSS13} A.W. Marcus, D.A. Spielman and N. Srivastava, Interlacing families II: mixed characteristic polynomials and the Kadison-Singer problem. To appear in {\it Ann. of Math.} {\tt arXiv: 1306.3969}

\bibitem[O85]{O85} A. Ocneanu, Actions of discrete amenable groups on von Neumann algebras, {\it Lecture Notes in Mathematics} {\bf 1138}, Springer-Verlag, 1985.

\bibitem[P81]{P81} S. Popa, Orthogonal pairs of *-subalgebras in
finite von Neumann algebras, {\it J. Operator Theory}, {\bf 9} (1983),
253-268.

\bibitem[P95]{P95} S. Popa, Classification of subfactors and their endomorphisms, {\it CBMS Lecture Notes} {\bf 86}, American Mathematical Society, 1995.

\bibitem[P97]{P97} S. Popa, The relative Dixmier property for inclusions of von Neumann algebras of finite index, {\it Ann. Sci. Ec. Norm. Sup.} {\bf 32} (1999), 743-767.

\bibitem[P01]{P01} S. Popa, On a class of type II$_1$ factors with
Betti numbers invariants, {\it Ann. of Math} {\bf 163} (2006), 809-899.


\bibitem[P13]{P13} S. Popa, A II$_1$ factor approach to the Kadison-Singer problem. {\it Comm. Math. Phys.} {\bf 332} (2014), 379-414.

\bibitem[T03]{Ta03} M. Takesaki, Theory of operator algebras. II. {\it Encyclopaedia of Mathematical Sciences} {\bf 125}. Springer-Verlag, Berlin, 2003.

\bibitem[T13]{Ta13} T. Tao, Real stable polynomials and the Kadison-Singer problem, \\ \href{https://terrytao.wordpress.com/tag/kadison-singer-problem/}{terrytao.wordpress.com/tag/kadison-singer-problem/}.

\bibitem[Va14]{Va14} A. Valette, Le probl\`{e}me de Kadison-Singer. {\it S\'{e}minaire Bourbaki 1088}, to appear in {\it Ast\'{e}risque}. arXiv:1409.5898

\bibitem[Vo86]{V86} D. Voiculescu, Multiplication of certain noncommuting random variables. {\it J. Operator Theory} {\bf 18} (1987), 223-235.

\bibitem[vN39]{vN39} J. von Neumann, On rings of operators. Reduction theory. {\it Ann. of Math.} {\bf 50} (1949), 401-485.
\end{thebibliography}
\end{document}